\documentclass{amsart}
\usepackage{amsmath, amssymb}
\usepackage[numbers]{natbib}
\usepackage{hyperref}
\usepackage{enumitem}
\usepackage{multirow}
\usepackage{float}
\usepackage{booktabs}
\usepackage{bm}

\setlist{nosep}
\newtheorem{theorem}{Theorem}[section]
\theoremstyle{observation}
\newtheorem{observation}[theorem]{Observation}
\newtheorem{proposition}[theorem]{Proposition}
\newtheorem{lemma}[theorem]{Lemma}
\newtheorem{corollary}[theorem]{Corollary}
\theoremstyle{definition}
\newtheorem{definition}[theorem]{Definition}

\theoremstyle{remark}

\numberwithin{equation}{section}

\begin{document}
\title[TPP Triples of Subgroups and Finite Nilpotent Groups of Class $2$]{On the triple product property for subgroups of finite nilpotent groups of class $2$}
\author{Sandeep R. Murthy}
\address{}
\curraddr{}
\email{srm@tuta.com}
\subjclass[2020]{Primary 20D60, Secondary 68R05}
\keywords{triple product property, finite groups, TPP triple, subgroup TPP triple, TPP ratio, subgroup TPP ratio, nilpotent groups, nilpotency class $2$, $p$-groups of nilpotency class $2$}
\date{16 January 2026}
\dedicatory{}

\begin{abstract}
{A number of upper bounds are proved relating to the triple product property (TPP) for subgroups of finite nilpotent groups of class $2$. The TPP is the property defined for three non-empty subsets $S, T, U$ of a group $G$ that the group equation $s's^{-1}t't^{-1}u'u^{-1} = 1$, over pairs of elements $s', s \in S$, $t', t \in T$, $u', u \in U$, is satisfied if and only if $s' = s$, $t' = t$, $u' = u$. When $G$ is finite, and the parameter $\rho_0(G)$, called \emph{subgroup TPP ratio}, is defined as $\rho_0(G) := \max\frac{|S||T||U|}{|G|}$, where the maximum is over the collection of all triples of subgroups $S, T, U$ of $G$ satisfying the TPP, this paper proves that \textup{(1)} $\rho_0(G) < \sqrt{|G:Z(G)}$} for (all) groups of nilpotency class $2$, \textup{(2)} $\rho_0(G) \leq p$ for $p$-groups with a cyclic commutator subgroup of order $p$, \textup{(3)} $\rho_0(G) = 1$ for $p$-groups of nilpotency class $2$ with a "large" centre, loosely defined as those satisfying $p^2 \leq |G:Z(G)| \leq p^3$, \textup{(4)} and $\rho_0(G) = 1$ for $p$-groups of nilpotency class $2$ with "small" (irreducible, complex) character degrees of $1$ or $p$.
\end{abstract}

\maketitle

\tableofcontents

\section{Introduction}

This paper addresses the following question(s):

\begin{quote}\label{Q1} If $G$ is \textup{(I)} a (finite) group of nilpotency class $2$, including, in particular, \textup{(II)} a $p$-group with a cyclic commutator subgroup of order $p$, or \textup{(III)} a $p$-group of nilpotency class $2$ with a "large" centre ($p^2 \leq |G:Z(G)| \leq p^3$), or \textup{(IV)} a $p$-group of nilpotency class $2$ with "small" character degrees ($\text{cd}(G) = \{1, p\}$), how large can the ratio $\rho_0(G) := \max \frac{|S||T||U|}{|G|}$ be, where the maximum is over the collection of all triples of subgroups $S, T, U$ of $G$ satisfying the so-called triple product property (TPP)? 
\end{quote}

These were motivated by patterns in computationally-derived TPP data for a number of "small" groups obtained originally by I. Hedtke and this author in 2012 \citep[Tables 1-4]{HM}, which included nilpotent groups of various nilpotency classes, as well as non-nilpotent groups. The data suggested the following propositions.

\textup{(1)} Groups with abelian normal subgroups of prime index $p$ satisfy $\rho_0 \leq \frac{p^2}{2p - 1}$, and, in particular, $p$-groups with abelian (maximal, normal) subgroups of index $p$ satisfy $\rho_0 = 1$.

\vspace{1mm}

\textup{(2)} Groups with cyclic normal subgroups of prime index $p$ satisfy $\rho \leq \frac{p^2}{2p - 1}$, where $\rho$ is a generalisation of $\rho_0$ to arbitrary subsets satisfying the TPP.

\vspace{1mm}

\textup{(3)} Groups of nilpotency class $2$ satisfy $\rho_0 < \sqrt{|G:Z(G)|}$ where $Z(G)$ is the group centre.

\vspace{1mm}

\textup{(4)} $p$-Groups with a cyclic commutator subgroup $G' = [G, G]$ of order $p$, which are necessarily of nilpotency class $2$, satisfy $\rho_0 \leq p$. This includes the extraspecial groups, which thus also satisfy $\rho_0 \leq p$.

\vspace{1mm}

\textup{(5)} $p$-Groups of nilpotency class $2$ with a large centre, say, such that $p^2 \leq |G:Z(G)| \leq p^3$, satisfy $\rho_0 = 1$. 

\vspace{1mm}

\textup{(6)} $p$-Groups of nilpotency class $2$ with small character degrees of $1$ or $p$ satisfy $\rho_0 = 1$.

\vspace{1mm}

Proposition \textup{(1)} was proved in \citep[Theorem 4.1, Corollary 4.3]{Mur}, while \textup{(2)} was conjectured for $p = 2$ in \citep[Conjecture 7.6]{HM} and for arbitrary $p$ in \citep[Conjecture 5.1]{Mur}. This paper proves propositions \textup{(3)-(6)} in sections \ref{SecGroupsOfTypeI}, \ref{SecGroupsOfTypeII}, \ref{SecGroupsOfTypeIII}, \ref{SecGroupsOfTypeIV}, with sections containing a table summarising the relevant computational data, where appropriate. There is a preliminary section containing definitions, basic facts and technical results needed for the main results.

\vspace{1mm}

The reader can refer to the sources for definitions and characterisations of nilpotency and nilpotency classes of groups, for example, the most standard one in terms of central series, and these are understood and taken as given. The focus of this paper is groups of nilpotency class $2$, which can be characterised as nonabelian groups satisfying the condition

\begin{equation}\label{NilpClassTwo}
\{1\} < G' \leq Z(G) < G.
\end{equation}

Particularly relevant for the approach used in this paper is that in such groups the centre $Z(G)$ is non-trivial, commutators are central, the central quotient $G/Z(G)$ is abelian, and there is an abelian quotient by any (normal) subgroup containing the centre. Bounds for the subgroup TPP ratio can then be derived by studying how the members of large subgroup TPP triples intersect with, or do not intersect with, the centre, and other key normal subgroups, such as the commutator subgroup $G'$.

\vspace{1mm}

Applications of the TPP for the complexity of matrix multiplication, for which it was originally introduced \citep{CU}, are not directly considered in the paper, but the upper bounds derived for the subgroup TPP ratio for the various classes of groups that are presented here are very relevant for such applications, for example, by leading to lower bounds for the parameter introduced in \cite[Definition 3.1, Lemma 3.1]{CU} for groups called the \emph{pseudoexponent} $\alpha$ that measures how effective a given group "realises" matrix multiplication. This paper continues the combinatorial and group-theoretic approach described in \citep{Neu2011}, and pursued in \citep{Mur}.

\section{Preliminaries}\label{Preliminaries}

Standard notation is used. Groups are generally finite, unless stated otherwise, and denoted by $G$, and subgroups by $H, K, N, P$ etc. or, in the case of subsets, including subgroups, satisfying the TPP, by $S, T, U$. The ring of integers modulo $n$ is denoted by $\mathbb{Z}_n$, and the (multiplicative) cyclic group of order $n$ by $C_n$. The term "$p$-group" denotes a finite group of order $p^n$ for some prime $p$ and positive integer $n$. All references to characters are to complex irreducible characters of finite groups. The set of unique character degrees of a group $G$ is denoted by $\text{cd}(G) = \{\chi(1) \mathrel | \chi \in \text{Irr}(G)\}$.

\vspace{1mm}

A number of basic definitions, notions and technical results used in the sections of main results will be recalled or proved, starting with the TPP \citep[Definition 2.1]{CU}.

\begin{definition}\label{TPP} Let $G$ be a group, finite or infinite, and $S, T, U$ non-empty subsets of $G$ with cardinalities $|S|, |T|, |U|$ respectively.  The triple $(S,T,U)$ satisfies the \emph{triple product property} (TPP) if there is an equivalence

\begin{equation}\label{TPPEqn} s's^{-1}t't^{-1}u'u^{-1} = 1 \iff s = s', t = t', u = u' \end{equation}

for all $s, s' \in S$, $t,t' \in T$, $u,u' \in U$. Then $G$ is said to \emph{realise} a TPP triple of \emph{parameter type}, or simply \emph{type}, $(|S|, |T|, |U|)$, and $|S|, |T|, |U|$ are called the \emph{parameters} of the triple and the product $|S||T||U|$ the \emph{size} of the triple.  If $S, T, U$ are subgroups of $G$ then $(S,T,U)$ is called a \emph{subgroup TPP triple} of $G$, for which the TPP condition simplifies to

\begin{equation} stu = 1 \iff s = t = u = 1, \end{equation}

for all $s \in S$, $t \in T$, $u \in U$. When $G$ is finite the TPP triple $(S, T, U)$ is be said to be of \emph{trivial} size or \emph{non-trivial} size depending on whether $|S||T||U| \leq |G|$ or $|S||T||U| > |G|$.
\end{definition}

Note that the term "subgroup TPP triple" refers only to TPP triples of subgroups, while "TPP triple" refers to triples composed of arbitrary (non-empty) subsets satisfying the TPP. Also note the permutation and translation invariance properties of TPP triples (see \citep[Lemma 2.1]{CU} and \citep[Observation 2.1]{Neu2011}) which are key in many results. As is a frequently used set-theoretic characterisation of the TPP reproduced below.

\begin{theorem} \label{HM} \textup{\citep[Theorem 3.1]{HM}} In a group $G$ three non-empty subsets $S,T,U \subseteq G$ satisfy the TPP if and only if 
\begin{equation}
\label{BasicTPPC2}Q(S) \cap Q(T)Q(U)  = Q(T) \cap Q(U) = \{1\}.
\end{equation}
where $Q(X) := \{x'x^{-1} \mathrel | x, x' \in X\}$ is called the quotient set of a non-empty subset $X\subseteq G$.  If $S, T, U$ are subgroups then this condition reduces to
\begin{equation}
\label{BasicTPPSub}S \cap TU  = T \cap U = \{1\}.
\end{equation}
\end{theorem}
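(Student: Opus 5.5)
We must prove Theorem \ref{HM}: the triple $(S,T,U)$ satisfies the TPP if and only if $Q(S) \cap Q(T)Q(U) = Q(T) \cap Q(U) = \{1\}$, and for subgroups this becomes $S \cap TU = T \cap U = \{1\}$. The plan is to rewrite the defining equation \eqref{TPPEqn} entirely in terms of quotient sets, and then to use that a subgroup equals its own quotient set. Every element of $Q(X)$ has the form $x'x^{-1}$, and $1 = xx^{-1} \in Q(X)$ always. Moreover $Q(X)$ is closed under inversion, since $(x'x^{-1})^{-1} = x(x')^{-1}$.

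\emph{Forward direction.} Assume the TPP holds.
\begin{itemize}
\item First suppose $a \in Q(T) \cap Q(U)$, so $a = t't^{-1} = u(u')^{-1}$ for some $t,t' \in T$ and $u,u' \in U$. Then $s s^{-1} \, t't^{-1} \, u'u^{-1} = 1$ for any $s \in S$. The TPP forces $t = t'$, hence $a = 1$.
\item Next suppose $x \in Q(S) \cap Q(T)Q(U)$, say $x = s's^{-1} = t't^{-1} u'u^{-1}$. Rearranging gives $s s'^{-1} \, t't^{-1} \, u'u^{-1} = 1$, which is an instance of \eqref{TPPEqn} with the roles of $s, s'$ swapped. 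The TPP gives $s = s'$, so $x = 1$.
\end{itemize}

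\emph{Converse.} Assume both intersections are trivial, and let $s's^{-1}t't^{-1}u'u^{-1} = 1$.
\begin{itemize}
\item Rearranging, $(s's^{-1})^{-1} = t't^{-1} \cdot u'u^{-1}$. The left side lies in $Q(S)$ by inverse-closure, and the right side lies in $Q(T)Q(U)$. So both sides equal $1$, which gives $s = s'$.
\item Then $t't^{-1} = (u'u^{-1})^{-1} \in Q(T) \cap Q(U) = \{1\}$, which gives $t = t'$ and $u = u'$.
\end{itemize}

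\emph{Subgroup case.} If $X$ is a subgroup then $Q(X) = X$: we have $x'x^{-1} \in X$, and conversely $x = x \cdot 1^{-1}$. Hence $Q(T)Q(U) = TU$, and \eqref{BasicTPPC2} becomes \eqref{BasicTPPSub}. Similarly the TPP equation reduces to $stu = 1 \iff s = t = u = 1$, by substituting $s := s's^{-1}$ and so on.

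There is no real obstacle here. The only point needing care is the order of factors and the inverse-closure of $Q(X)$ when moving terms across the equation in the converse. This is exactly why the condition is stated with $Q(S)$ against the product $Q(T)Q(U)$ in that order.
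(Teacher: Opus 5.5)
Your proof is correct and complete. The forward direction rightly extracts $t=t'$ from $ss^{-1}\,t't^{-1}\,u'u^{-1}=1$ and $s=s'$ from $ss'^{-1}\,t't^{-1}\,u'u^{-1}=1$. The converse rightly uses $1\in Q(X)$ and $Q(X)^{-1}=Q(X)$, and the subgroup case follows from $Q(X)=X$.

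The paper gives no proof of its own here; it quotes the result from \citep[Theorem 3.1]{HM}. Your direct element-wise verification is the natural argument.

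One minor point on your closing remark: the order of the factors in $Q(T)Q(U)$ is not essential. Since $(Q(T)Q(U))^{-1}=Q(U)Q(T)$ and $Q(S)$ is inverse-closed, $Q(S)\cap Q(T)Q(U)=\{1\}$ holds if and only if $Q(S)\cap Q(U)Q(T)=\{1\}$. The order only makes your rearrangement immediate.
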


A key consequence of \citep[Observation 2.1]{Neu2011} is that any TPP triple $(S', T', U')$ of a group $G$ can be translated to a type-equivalent TPP triple $(S, T, U)$ satisfying the property that

\begin{align}1 \in S \cap T \cap U.
\end{align}

Such TPP triples are called \emph{basic}. Subgroup TPP triples are necessarily basic. All references to TPP triples shall be to basic TPP triples, unless otherwise stated.

\vspace{1mm}

For a group $G$ the \emph{subgroup TPP ratio} is defined as

\begin{equation}\label{TPPRho0}
\rho_0(G) := \frac{\beta_0(G)}{|G|}
\end{equation}

where, following \citep{Neu2011, Mur}, $\beta_0(G)$ is the \emph{subgroup TPP capacity of $G$} defined as

\begin{equation}\label{TPPBeta0}
\beta_0(G) := \max \;\{|S||T||U| \mid (S,T,U) \textrm{ is a subgroup TPP triple of }G\}.
\end{equation}.

The second, more general quantity, called \emph{TPP ratio}, is defined as

\begin{equation}\label{TPPRho}
\rho(G) := \frac{\beta(G)}{|G|}
\end{equation}

where $\beta(G)$ is the \emph{TPP capacity of $G$} defined as

\begin{equation}\label{TPPBeta}
\beta(G) := \max \;\{|S||T||U| \mid (S,T,U) \textrm{ is a TPP triple of }G\}. 
\end{equation}

As $G$ always realises the trivial (subgroup) TPP triple $(G,\{1\},\{1\})$ then $|G| \leq \beta_0(G) \leq \beta(G)$ and $1 \leq \rho_0(G) \leq \rho(G)$. For abelian groups there is a basic but key fact.

\begin{theorem}\textup{\citep[from the proof of Lemma 3.1]{CU}}\label{ThmRhoAbelianGroups} If $G$ is an abelian group then $\rho_0(G) = \rho(G) = 1$, or, equivalently, $\beta_0(G) = \beta(G) = |G|$.
\end{theorem}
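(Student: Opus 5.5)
We prove the equivalent statement $\beta(G) \le |G|$. Since $|G| \le \beta_0(G) \le \beta(G)$ always holds, this forces $\beta_0(G) = \beta(G) = |G|$, and hence $\rho_0(G) = \rho(G) = 1$. The plan is an injectivity argument that uses commutativity in an essential way.

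Let $(S,T,U)$ be any TPP triple of the abelian group $G$. Consider the map
\[
\phi : S \times T \times U \to G, \qquad (s,t,u) \mapsto s\,t\,u^{-1}.
\]
We will show that $\phi$ is injective. Suppose $s t u^{-1} = s' t' u'^{-1}$. Because $G$ is abelian, we can rearrange this as
\[
s'^{-1}s \cdot t'^{-1}t \cdot u' u^{-1} = 1.
\]
With the TPP convention of Definition \ref{TPP}, which uses quotients of the form $x'x^{-1}$, we rewrite the factors freely, again by commutativity: $s'^{-1}s = s s'^{-1}$, and similarly for the other two. The resulting equation has exactly the shape $\sigma'\sigma^{-1}\tau'\tau^{-1}\mu'\mu^{-1}=1$ with $\sigma,\sigma'\in S$, $\tau,\tau'\in T$, $\mu,\mu'\in U$. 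The TPP then yields $s=s'$, $t=t'$ and $u=u'$, so $\phi$ is injective.

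Injectivity gives $|S||T||U| \le |G|$. Taking the maximum over all TPP triples gives $\beta(G) \le |G|$, and combining this with the trivial lower bound from the triple $(G,\{1\},\{1\})$ finishes the argument.

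There is no real obstacle here. The only point needing care is matching the order of the factors in the rearranged equation to the exact form of the TPP equation, and that order is irrelevant in an abelian group. An alternative route for subgroups would use Theorem \ref{HM}: in the abelian case $STU$ is a subgroup, and the conditions $S \cap TU = T\cap U = \{1\}$ give $|STU| = |S||T||U| \le |G|$. However, the general injectivity argument already covers $\rho$ and so subsumes the subgroup case.
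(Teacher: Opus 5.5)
Your proof is correct and is essentially the paper's argument: the paper also proves injectivity of the triple product map $S\times T\times U\to G$ (its formula $(s,t,u)\mapsto(s,t,u)$ is a typo for $stu$), and you have written out the rearrangement into TPP form that the paper leaves implicit. Using $u^{-1}$ in your map is harmless but unnecessary, since in an abelian group $stu=s't'u'$ already rearranges to $ss'^{-1}\,tt'^{-1}\,uu'^{-1}=1$.
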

\begin{proof} If $(S, T, U)$ is a TPP triple of an abelian group $G$ then the "triple product map" $S \times T \times U \longrightarrow G$ given by $(s, t, u) \longmapsto (s, t, u)$ is injective.
\end{proof}

A further definition is useful in differentiating TPP triples by combinations of parameter type and size.

\begin{definition}\label{ProperTPP} A TPP triple $(S, T, U)$ of a group $G$ is said to be \emph{proper} if $\min\{|S|, |T|, |U|\} > 1$, or \emph{improper} if $\min\{|S|, |T|, |U|\} = 1$, or \emph{non-trivial} if $|S||T||U| > |G|$, or \emph{trivial} if $|S||T||U| \leq |G|$, or \emph{maximal} if $\rho(G) = \frac{|S||T||U|}{|G|}$ or, in case $(S, T, U)$ is a maximal subgroup TPP triple, if $\rho_0(G) = \frac{|S||T||U|}{|G|}$.
\end{definition}

Obviously, the proper/improper and trivial/non-trivial distinctions between TPP triples are mutually exclusive, but there are also combinations, for example, a proper or improper trivial TPP triple, or a non-trivial or trivial maximal TPP triple. Note that a maximal TPP triple need not be unique or non-trivial, and a non-trivial TPP triple need not be unique or maximal. But an improper TPP triple is always trivial \citep[Proof of Lemma 3.1]{CU}.

\vspace{1mm}

The notion of permutability of subsets of groups is very relevant to TPP triples, and is formally defined below.

\begin{definition} Two non-empty subsets $X, Y$ of a group $G$ are said to \emph{permute} (in $G$), or, equivalently, be \emph{permutable} (in $G$), if $XY = YX$, where $XY = \{xy \mathrel | x \in X, y \in Y\}$ and $YX = \{yx \mathrel | x \in X, y \in Y\}$. Clearly, this notion is symmetric, as $X$ permuting with $Y$ implies that $Y$ permutes with $X$.
\end{definition}

One form of permutability is when a subset normalises another: that is, for non-empty subsets $X, Y \subseteq G$ if $Xy = yX$ for all $y \in Y$ then $Y \subseteq N_G(X)$, where $N_G(X)$ is the normaliser of $X$ in $G$, and $XY = \bigcup_{y \in Y}Xy = \bigcup_{y \in Y}yX = YX$. Of course, if $X$ is a normal subgroup of $G$ then $N_G(X) = G$. A stronger form of permutability in groups, which implies normalisation, is centralisation, where, using the sets $X, Y$ above, $xy = yx$ for all $x \in X$ and $y \in Y$, in which case $Y \subseteq C_G(X)$, where $C_G(X)$ is the centraliser of $X$ in $G$ and $C_G(X) \leq N_G(X)$.

\begin{proposition}\label{PropTPPPermSubsets}Let $(S, T, U)$ be a non-trivial TPP triple of a group $G$, that is, one such that $|S||T||U| > |G|$. Then

\vspace{1mm}

\textup{(1)} No member $S, T, U$ permutes with another member and its inverse, that is, for any distinct $X, Y \in \{S, T, U\}$ either $XY \neq YX$ or $XY^{-1} \neq Y^{-1}X$.

\vspace{1mm}

\textup{(2)} No member $S, T, U$ is normalised by (or, in particular, centralised by) by $G$ or by each other, that is,  $N_G(X) < G$ for any $X \in \{S, T, U\}$, and $Y, Z \subsetneq N_G(X)$ for any distinct $X, Y, Z \in \{S, T, U\}$. In particular, if $(S, T, U)$ is a subgroup TPP triple then $S, T, U$ are all non-normal subgroups.

\vspace{1mm}

\textup{(3)} If $H$ is the (proper) normaliser (in $G$) of any member of $\{S, T, U\}$ and has index $n = |G: H|$ then $|S|$, $|T|$, $|U|$ satisfy
\begin{equation}
\frac{|S||T||U|}{|G|} \leq n\rho(H)
\end{equation}
and, furthermore, if $H$ is abelian, then
\begin{equation}
\frac{|S||T||U|}{|G|} \leq n.
\end{equation}
\end{proposition}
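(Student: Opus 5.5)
The plan is to prove the three parts in order. Part (2) reduces to part (1), and part (3) is a coset-counting argument layered on top. Throughout I use Theorem \ref{HM} and the permutation and translation invariance of TPP triples, so the triple can be reordered and kept basic.

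\emph{Part (1).} By permutation invariance it suffices to treat one ordered pair of members, chosen to be adjacent in the TPP equation; say $X = T$, $Y = U$. Assume $TU = UT$ and $TU^{-1} = U^{-1}T$; the goal is to show the product map
\[
\mu : S \times T \times U \to G, \qquad (s,t,u) \mapsto s\,t\,u
\]
is injective. That gives $|S||T||U| \le |G|$, contradicting non-triviality.
\begin{itemize}
\item Suppose $s_1t_1u_1 = s_2t_2u_2$. Then $s_2^{-1}s_1 = t_2u_2u_1^{-1}t_1^{-1}$.
\item Use the hypothesis $TU^{-1} = U^{-1}T$ (together with $TU = UT$) to slide the $T$-factors past the $U$-factors. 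The aim is to rewrite the right-hand side as an element of $Q(T)Q(U)$, i.e. as $t't^{-1}u'u^{-1}$ with $t,t' \in T$ and $u,u' \in U$.
\item Theorem \ref{HM} then forces $s_2^{-1}s_1 = 1$ and that element of $Q(T)Q(U)$ to be trivial.
\item The condition $Q(T) \cap Q(U) = \{1\}$ then gives $t_1 = t_2$ and $u_1 = u_2$.
\end{itemize}
The delicate point is the rewriting step. Permutability only guarantees that \emph{some} $t,t',u,u'$ exist in the rewritten product; it does not identify them with the original $t_i,u_i$. If this cannot be repaired, the fallback is a counting argument: show $|Q(T)Q(U)| \ge |T||U|$ and that $S$ meets each right coset-like set $Q(T)Q(U)g$ at most once. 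Either route yields $|S||T||U| \le |G|$.

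\emph{Part (2).} This follows directly from part (1). If $Y \subseteq N_G(X)$, then $Xy = yX$ and $Xy^{-1} = y^{-1}X$ for every $y \in Y$. Taking unions over $y \in Y$ gives $XY = YX$ and $XY^{-1} = Y^{-1}X$, which part (1) forbids. Normalisation by $G$ is the special case $Y \subseteq G = N_G(X)$, and centralisation is stronger than normalisation. For subgroups, $Q(X) = X$, so "$N_G(X) = G$" is exactly normality.

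\emph{Part (3).} Let $X$ be the member with $H = N_G(X)$, so $X \subseteq H$ (for subgroups; for subsets, translate so that $1 \in X$ and replace $X$ by $X \cap H$). Let $Y, Z$ be the other two members, and decompose them along cosets of $H$: $Y$ along left cosets $aH$ and $Z$ along right cosets $Hb$.
\begin{itemize}
\item Choose cosets by pigeonhole so that $Y' = Y \cap aH$ and $Z' = Z \cap Hb$ satisfy $|Y'||Z'| \ge |Y||Z|/n$.
\item The intended tool is the averaging identity $\sum_{g} |Y \cap gH|\,|Z \cap Hg^{-1}|$, taken over coset representatives and paired via the relation in which the TPP equation forces cosets to multiply back into $H$.
\item Translate to get $(X, a^{-1}Y', Z'b^{-1})$. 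This is a TPP triple, since subsets and translates of TPP triples are TPP triples, and it lies inside $H$. Hence $|X||Y'||Z'| \le \beta(H) = \rho(H)|H|$.
\item Then $|S||T||U| \le n\,\rho(H)\,|H| = n\,\rho(H)\,|G|/n \cdot n$, i.e. $\frac{|S||T||U|}{|G|} \le n\rho(H)$.
\end{itemize}
The abelian case then follows from Theorem \ref{ThmRhoAbelianGroups}, since $\rho(H) = 1$.

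\emph{Expected obstacle.} The hardest step is getting only a single factor of $n$ rather than $n^2$ when restricting both $Y$ and $Z$. My first attempt would use the fact that $X$ is normalised by $H$ to absorb one of the coset choices: restrict only $Y$ to its heaviest coset, and show that $Z$ can be translated wholesale into $H$ modulo $X$.
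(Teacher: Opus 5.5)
\textbf{The genuine gap is in part (3): you aim at a bound that is stronger than the claim and false.} Since $|G| = n|H|$ and $\beta(H) = \rho(H)|H|$, the claimed inequality $\frac{|S||T||U|}{|G|} \le n\rho(H)$ is equivalent to $|X||Y||Z| \le n^2\beta(H)$. So you may lose a full factor of $n$ for \emph{each} of the two members not contained in $H$.

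Ordinary pigeonhole therefore suffices. Take heaviest coset pieces, so that $|Y| \le n|Y'|$ and $|Z| \le n|Z'|$. Move these pieces into $H$ and apply $\beta(H)$. This is exactly the paper's proof: $|S||T||U| \le \tau\upsilon|S||T_0||U_0| \le n^2\beta(H)$.

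What you try to prove instead is $|Y'||Z'| \ge |Y||Z|/n$, hence $|X||Y||Z| \le n\beta(H) = \rho(H)|G|$. That is false in general.
\begin{itemize}
\item Take the extraspecial group $2^{1+4}_+$ (GAP [32,49], with $\rho_0 = 2$ in Table~\ref{tbl:TblRho0GroupsOfTypeIIa}). A triple attaining $\rho_0 = 2$ has type $(4,4,4)$, and each member meets $Z(G)$ trivially.
\item For such a member, $N_G(S) = C_G(S) = SZ(G)$. Commutators are central and $S \cap Z(G) = 1$, which forces $N_G(S) = C_G(S)$. Also $SZ(G)/Z(G)$ is a maximal isotropic subspace of $G/Z(G)$, so its centraliser is $SZ(G)$ itself.
\item So $H = SZ(G)$ is abelian of index $4$. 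A one-factor bound would give ratio $\le \rho(H) = 1 < 2$.
\end{itemize}
No averaging identity can rescue that step, and the ``expected obstacle'' you describe is illusory. Your final display also contains an arithmetic slip that masks the inconsistency: $n\rho(H)|H| = \rho(H)|G|$, not $n\rho(H)|G|$.

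Two further repairs are needed in (3).
\begin{itemize}
\item Decompose \emph{both} $Y$ and $Z$ along right cosets $Ha$, $Hb$, and right-translate to $Y'a^{-1}$ and $Z'b^{-1}$. Independent right translations preserve every $Q(\cdot)$ and hence the TPP. Left-translating a single member replaces $Q(Y')$ by $a^{-1}Q(Y')a$ and can destroy the TPP.
\item For subgroups no translation is needed. Each nonempty $Y \cap gH$ is a coset of $Y \cap H$, so $|Y| \le n|Y \cap H|$, and $(X, Y \cap H, Z \cap H)$ is a subgroup TPP triple of $H$.
\item For arbitrary subsets, replacing $X$ by $X \cap H$ only bounds $|X \cap H||Y||Z|$, not $|X||Y||Z|$. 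For subgroups, $X \le N_G(X)$ makes this moot.
\end{itemize}

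\textbf{Parts (1) and (2).} These are fine and follow the paper. In (1), the ``delicate'' rewriting needs no matching of elements. The inverses $U^{-1}T^{-1} = T^{-1}U^{-1}$ and $UT^{-1} = T^{-1}U$ of your two hypotheses give the set identity $TUU^{-1}T^{-1} = TUT^{-1}U^{-1} = TT^{-1}UU^{-1} = Q(T)Q(U)$. Membership in this set is all Theorem~\ref{HM} requires; it is the paper's computation $Q(ST) = Q(T)Q(S)$.

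For arbitrary subsets, watch the orientation. The element $s_2^{-1}s_1$ lies in $S^{-1}S$, not in $Q(S) = SS^{-1}$, so you should use the map $(s,t,u) \mapsto s^{-1}tu$ instead. The last step $t_1u_1 = t_2u_2 \Rightarrow t_1 = t_2$ needs $T^{-1}T \cap Q(U) = \{1\}$, which is not literally a TPP condition. The paper passes over the same point when it tacitly uses $|ST| = |S||T|$.
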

\begin{proof}Let $G$ and the TPP triple $(S, T, U)$ be as given.

\textup{(1)} Let $S$ permute with $T$ and $T^{-1}$. Then $Q(ST) = ST(ST)^{-1} = STT^{-1}S^{-1} = TT^{-1}SS^{-1} = Q(T)Q(S)$, where $Q(T)Q(S) \cap Q(U) = \{1\}$ (by the TPP for the permuted triple $(T, S, U)$), and so by \citep[Lemma 3.1]{CU} $(ST, U, \{1\})$ is a TPP triple of $G$ such that $|S||T||U| \leq |G|$, a contradiction. Thus $S$ does not permute with $T$ and $T^{-1}$, and, analogously, $S$ does not permute with $U$ and $U^{-1}$, and $T$ does not permute with $U$ and $U^{-1}$.

\vspace{1mm}

\textup{(2)} Let $S$ be normalised by $G$, that is, $N_G(S) = G$. Then $S$ permutes with $T$ and $T^{-1}$, and by part \textup{(1)} $|S||T||U| \leq |G|$, a contradiction. So $N_G(S) < G$, that is, $S$ is normalised by a proper subgroup of $G$. Analogously, $T$ and $U$ are not normalised by $G$, and $N_G(T), N_G(U) < G$. This means that if $S, T, U$ are subgroups they are all non-normal in $G$. In particular, $S$, $T$, and $U$ are not centralised by $G$, or by each other.

\vspace{1mm}

\textup{(3)} Let $H = N_G(S)$ and $n = |G:H|$. By part \textup{(1)}, $N_G(S) < G$, and also $T, U \subsetneq H$, that is, $T$ and $U$ are not contained in $H$ (otherwise $S$ would permute with $T$ and $T^{-1}$, or with $U$ and $U^{-1}$, and by part \textup{(1)} $|S||T||U| \leq |G|$, a contradiction).  Let $G/H$ be the collection of (left) cosets of $H$ in $G$, and let $\overline{S}, \overline{T}, \overline{U} \subseteq G/H$ be the $H$-supports of $S, T, U$ respectively, defined as in \citep[Definition 3.1]{Mur}, with sizes $\sigma = |\overline{S}|$, $\tau = |\overline{T}|$, $\tau = |\overline{U}|$ respectively. Now, $\sigma = 1$ as $S \subseteq H$, while $1 < \tau, \upsilon \leq n$ as $T, U \subsetneq H$. Also, let $T_0 = T \cap H$ and $U_0 = U \cap H$. Using independent right-translation of $T$ and $U$ \citep[Observation 4.1]{Neu2011} it can be supposed that $|T_0| \geq |T_y = T \cap yH|$ and $|U_0| \geq |U_y = U \cap zH|$ for any $yH \in \overline{T}$ and $zH \in \overline{U}$. The sets $\{T_y\}_{yH \in \overline{T}}$ and $\{U_z\}_{zH \in \overline{U}}$ form partitions of $T$ and $U$ respectively, and there is a bound for $|S||T||U|$ given by
\begin{align*}|S||T||U| &= |S|\sum_{yH \in \overline{T}}|T_y|\sum_{zH \in \overline{U}}|U_z| \\
                        &\leq \tau\upsilon|S||T_0||U_0| \\
                        &\leq n^2\beta(H).
\end{align*}

Dividing both sides by $|G| = n|H|$ yields
\begin{align*}\frac{|S||T||U|}{|G|} &\leq \frac{n^2\beta(H)}{|G|} \\
                                    &= \frac{n^2\beta(H)}{n|H|} \\
                                    &= n\rho(H)
\end{align*}

where, additionally, if $H$ is abelian, then $\rho(H) = 1$ and the conclusion follows.
\end{proof}

When at least two members of a TPP triple of a group are contained in a single larger subgroup the TPP ratio of the group is bounded by the TPP ratio of the subgroup, as described in the following result.

\begin{proposition}\label{PropTPPSplit} Let $G$ be a group, $H$ a subgroup, and $(S, T, U)$ a TPP triple of $G$. If $S, T \subseteq H$ then $\frac{|S||T||U|}{|G|} \leq \rho(H)$. Additionally, if $(S, T, U)$ is maximal for $G$ then $\rho(G) \leq \rho(H)$, and, additionally, if $H$ is abelian, then $\rho(G) = 1$. There is a subgroup version of this where "TPP triple" is replaced by "subgroup TPP triple", $\rho(G)$ by $\rho_0(G)$, and $\rho(H)$ by $\rho_0(H)$.
\end{proposition}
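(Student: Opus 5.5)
The key observation is that if $S, T \subseteq H$ then $(S, T, U)$ behaves inside $H$ like a TPP triple after we cut $U$ into pieces lying in cosets of $H$. We work with the right cosets $Hg$, and for each $g$ set $U_g := U \cap Hg$. These sets partition $U$ into at most $n := |G:H|$ non-empty pieces.

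The main claim is that, for each such coset with $U_g \neq \emptyset$ and any fixed $u_0 \in U_g$, the triple $(S, T, U_g u_0^{-1})$ is a TPP triple of $H$.
\begin{itemize}
\item First, $U_g u_0^{-1} \subseteq Hg g^{-1} H = H$, so all three sets lie in $H$.
\item Next, $Q(U_g u_0^{-1}) = U_g u_0^{-1} u_0 U_g^{-1} = Q(U_g) \subseteq Q(U)$, and also $Q(S), Q(T) \subseteq H$.
\item Since $(S, T, U)$ satisfies the condition of Theorem \ref{HM}, so does $(S, T, U_g u_0^{-1})$: the relevant quotient sets only shrink.
\end{itemize}
Hence $|S||T||U_g| \leq \beta(H) = \rho(H)|H|$. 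Summing over cosets would only give the weak bound $|S||T||U| \leq n\rho(H)|H| = \rho(H)|G|$ if we charge every coset in this way. That is exactly the desired inequality, but we need to check the bookkeeping is sound.

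Summing over the at most $n$ cosets gives
\begin{equation*}
|S||T||U| = \sum_{g} |S||T||U_g| \leq n\,\rho(H)|H| = \rho(H)|G|.
\end{equation*}
Dividing by $|G|$ yields $\frac{|S||T||U|}{|G|} \leq \rho(H)$, which is the first assertion.

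The remaining assertions follow quickly.
\begin{itemize}
\item If $(S, T, U)$ is maximal, the left-hand side equals $\rho(G)$, so $\rho(G) \leq \rho(H)$.
\item If $H$ is abelian, Theorem \ref{ThmRhoAbelianGroups} gives $\rho(H) = 1$. Combined with $\rho(G) \geq 1$, this forces $\rho(G) = 1$.
\end{itemize}

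For the subgroup version, take $S, T, U$ to be subgroups with $S, T \leq H$. Then $U \cap H$ is a subgroup of $H$, and $(S, T, U \cap H)$ is a subgroup TPP triple of $H$ by \eqref{BasicTPPSub}. The cosets $U_g$ are now cosets of $U \cap H$, namely $(U \cap H)u_0$, so all have size $|U \cap H|$, and there are $|U : U \cap H| \leq n$ of them. We therefore get
\begin{equation*}
|S||T||U| = |U:U\cap H|\,|S||T||U\cap H| \leq n\,\beta_0(H),
\end{equation*}
which gives $\frac{|S||T||U|}{|G|} \leq \rho_0(H)$. The maximality and abelian consequences follow exactly as before, using $\rho_0(G) \geq 1$.

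The only delicate point is choosing right cosets, so that the translation by $u_0^{-1}$ lands in $H$. With left cosets one would instead take $u_0^{-1}U_g$ and use the form of the TPP condition in Theorem \ref{HM} adapted to that side, which would be awkward.
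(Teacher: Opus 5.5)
Your proof is correct. It follows the paper's overall strategy: decompose $U$ along the cosets of $H$, bound what lies in $H$ by $\beta(H)$, and let the factor $n=|G:H|$ cancel against $|G|=n|H|$. The difference is in how the pieces are handled.

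The paper works with left cosets $gH$ and keeps only $U_0=U\cap H$. It then invokes translation invariance to assume $U_0$ is the largest piece, giving $|U|\le n|U_0|$. You instead use right cosets, right-translate every piece $U\cap Hg$ into $H$, and bound each piece separately by $\beta(H)$. This removes any need to normalise which piece is largest.

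Your version is also the more careful one. Right translation preserves quotient sets $Q(\cdot)$ and permutes right cosets of $H$. With the paper's left cosets, right-translating $U$ by $u^{-1}$ (for $u$ in the largest piece) sends $U\cap uH$ into $uHu^{-1}$ rather than into $H$. Left-translating $U$ alone conjugates $Q(U)$, which does not preserve the TPP in general. So the paper's ``without loss of generality'' really wants right cosets, and your per-piece argument avoids the issue entirely. In the subgroup case the two arguments coincide, since every piece is a coset of $U\cap H$ of the same size.

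One small expository point: the sentence calling the summed estimate a ``weak bound'' is confusing, since it is exactly the bound you need. You can simply drop it.
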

\begin{proof}Let $G$, $H$ and the TPP triple $(S, T, U)$ be given as above. Let $G/H$ be the collection of $n = |G:H|$ left cosets of $H$ in $G$, and let $\overline{U} = \{gH \in G/H \mathrel | U \cap gH \neq \emptyset\}$ be the $H$-support of $U$. As $1 \in U \cap H$ then $U$ is at least partly contained in $H$ and $U_0 = U \cap H$ is non-empty. Then $(S, T, U_0)$ forms a TPP triple of $H$ such that $\frac{|S||T||U_0|}{|H|} \leq \rho(H)$, and, by translation invariance \citep[Observation 2.1]{Neu2011} it may be supposed that any support element $gH \in \overline{U}$ intersects with $U$ in at most $|U_0|$ elements, that is, $|U \cap gH| \leq |U_0|$ for all $gH \in \overline{U}$. The set $\overline{U}$ forms a partition of $U$ over the cosets of $H$ in $G$, and $|U| \leq n|U_0|$. Letting $U_1 = U \backslash U_0$ and $|U_1| = |U| - |U_0|$, so that $|U| = |U_0| + |U_1|$, where $|U_1| \leq (n - 1)|U_0|$, then

\begin{align*}\frac{|S||T||U|}{|G|} &= \frac{|S||T|(|U_0| + |U_1|)}{|G|} \\
                                    &= \frac{|S||T|(|U_0| + |U_1|)}{n|H|} \\
                                    &= \frac{1}{n}\frac{|S||T||U_0|}{|H|} + \frac{1}{n}\frac{|S||T||U_1|}{|H|} \\
                                    &\leq \frac{1}{n}\frac{|S||T||U_0|}{|H|} + \frac{n - 1}{n}\frac{|S||T||U_0|}{|H|} \\
                                    &= \frac{|S||T||U_0|}{|H|} \\
                                    &\leq \rho(H).
\end{align*}

The second part of the claim follows by taking $(S, T, U)$ to be maximal for $G$, that is, such that $\rho(G) = \frac{|S||T||U|}{|G|}$. From this the third part follows by taking $H$ to be abelian, in which case $\rho(H) = \rho(G) = 1$. The equivalent statement for subgroups follows by replacing "TPP triple" by "subgroup TPP triple", $\rho(G)$ by $\rho_0(G)$, and $\rho(H)$ by $\rho_0(H)$.
\end{proof}

There is a corollary - also independently implied by Proposition \ref{PropTPPPermSubsets} - that is not needed for the main results but is useful to note.

\begin{corollary} If $(S, T, U)$ is a non-trivial TPP triple of a nonabelian group $G$, that is, such that $|S||T||U| > |G|$, then no two members $X, Y \in \{S, T, U\}$ generate an abelian subgroup, that is, the subgroups $\langle S, T \rangle$, $\langle S, U \rangle$, $\langle T, U \rangle$ are all nonabelian.
\end{corollary}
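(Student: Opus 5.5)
The plan is to argue by contradiction and derive the corollary directly from Proposition \ref{PropTPPSplit}. Suppose $(S, T, U)$ is a TPP triple of the nonabelian group $G$ with $|S||T||U| > |G|$. Suppose also that some pair of members generates an abelian subgroup. By the permutation invariance of TPP triples \citep[Lemma 2.1]{CU}, we may relabel so that this pair is $S, T$. Put $H = \langle S, T \rangle$, which is then abelian by assumption.

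Since $S, T \subseteq H$, the first assertion of Proposition \ref{PropTPPSplit} gives
\[
\frac{|S||T||U|}{|G|} \leq \rho(H).
\]
Because $H$ is abelian, Theorem \ref{ThmRhoAbelianGroups} gives $\rho(H) = 1$. Hence $|S||T||U| \leq |G|$, which contradicts non-triviality. Therefore none of $\langle S, T\rangle$, $\langle S, U\rangle$, $\langle T, U\rangle$ can be abelian.

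Note that we need only the inequality part of Proposition \ref{PropTPPSplit}, not the clause about maximal triples. So we do not need $(S, T, U)$ to be maximal.

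The only point that needs care is making sure Proposition \ref{PropTPPSplit} applies to $H$ as used here. Its proof assumes the triple is basic, so that $1 \in U \cap H$ and $U_0 = U \cap H$ is nonempty. This holds under the paper's standing convention that all TPP triples are basic, which translation \citep[Observation 2.1]{Neu2011} provides without changing the type. Translating $S$ or $T$ could move them out of $H$. To avoid this, I would only translate $U$, and use the translation invariance of the TPP to ensure $1 \in U$ while leaving $S$ and $T$ fixed. Here $1 \in S \cap T$ already holds for basic triples, and right-translating $U$ alone preserves the TPP, because the quotient set $Q(U)$ is unchanged when $U$ is replaced by $Ug$.

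As an independent cross-check, one can instead use Proposition \ref{PropTPPPermSubsets}(1). If $\langle S, T\rangle$ is abelian, then $S$ commutes elementwise with $T$ and with $T^{-1}$. Hence $ST = TS$ and $ST^{-1} = T^{-1}S$, which that result forbids for non-trivial triples.
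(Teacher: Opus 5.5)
Your argument is correct and is essentially the paper's own proof: apply Proposition \ref{PropTPPSplit} with $H = \langle X, Y\rangle$ abelian, so that $\rho(H) = 1$ forces $|S||T||U| \leq |G|$. Your cross-check via Proposition \ref{PropTPPPermSubsets}(1) is the independent route the paper mentions just before the corollary. Your care about translating only $U$ is harmless, and for basic triples no translation is needed at all.
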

\begin{proof} Use Proposition \ref{PropTPPSplit} and let $H = \langle X, Y \rangle$ for any two $X, Y \in \{S, T, U\}$.
\end{proof}

A basic fact about abelian normal subgroups in groups of nilpotency class $2$ realised as direct products that will be useful.

\begin{observation}\label{ObsNilpClassTwo} If $G$ is a group of nilpotency class $2$, and $H$ a subgroup of $G$ such that $H \cap Z(G) = \{1\}$, then $HZ(G)$ is an abelian normal subgroup of $G$ and a direct product.
\end{observation}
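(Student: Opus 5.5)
The plan is to reduce everything to the fact that $H$ is forced to be abelian. Then $HZ(G)$ is an internal direct product of two commuting abelian subgroups with trivial intersection. Normality will come from the class $2$ property that commutators are central.

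\textbf{Step 1: $H$ is abelian.} Since $G$ has nilpotency class $2$, condition \eqref{NilpClassTwo} gives $G' \leq Z(G)$. For $h_1, h_2 \in H$, the commutator $[h_1, h_2]$ lies in both $H$ and $G'$, so
\[
H' \leq H \cap G' \leq H \cap Z(G) = \{1\}.
\]
Hence every pair of elements of $H$ commutes.

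\textbf{Step 2: $HZ(G)$ is an abelian subgroup and a direct product.} Because $Z(G)$ is central, it is normal, so $HZ(G) = Z(G)H$ is a subgroup. Take elements $h_1z_1$ and $h_2z_2$ with $h_i \in H$ and $z_i \in Z(G)$. Using Step 1 and the centrality of the $z_i$, we get $h_1z_1h_2z_2 = h_1h_2z_1z_2 = h_2h_1z_2z_1 = h_2z_2h_1z_1$, so $HZ(G)$ is abelian. Both $H$ and $Z(G)$ are normal in the abelian group $HZ(G)$, and they intersect trivially by hypothesis. Therefore $HZ(G) = H \times Z(G)$ as an internal direct product, and $|HZ(G)| = |H||Z(G)|$.

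\textbf{Step 3: normality in $G$.} Let $g \in G$ and $x \in HZ(G)$. Then $g^{-1}xg = x[x,g]$ with $[x,g] \in G' \leq Z(G)$. Since $x \in HZ(G)$ and $HZ(G)$ contains $Z(G)$, the product $x[x,g]$ lies in $HZ(G)Z(G) = HZ(G)$. Hence $HZ(G) \trianglelefteq G$.

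Equivalently, $HZ(G)$ contains $Z(G) \supseteq G'$, and every subgroup containing $G'$ is normal, since its image in the abelian group $G/G'$ is normal.

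None of the steps is a real obstacle. The only point needing care is Step 1: recognising that the hypothesis $H \cap Z(G) = \{1\}$, together with $G' \leq Z(G)$, forces $H$ to be abelian, rather than adding abelianness as an extra assumption.
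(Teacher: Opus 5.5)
Your proof is correct and follows essentially the same route as the paper: $H$ is abelian because $H' \leq H \cap G' \leq H \cap Z(G) = \{1\}$, the direct product comes from the trivial intersection with the central subgroup $Z(G)$, and normality comes from commutators being central. The only differences are cosmetic: the paper gets normality from the correspondence theorem applied to the abelian quotient $G/Z(G)$ rather than from your identity $g^{-1}xg = x[x,g]$, and it leaves implicit the commutativity check for $HZ(G)$ that you write out.
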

\begin{proof}Let $G$ and $H$ be given as above, and let $H \cap Z(G) = \{1\}$. As $G' \leq Z(G)$ by assumption, $H' = [H, H] \leq H \cap G' = \{1\}$ and $H$ is abelian. As $Z(G)$ is normal in $G$ the product $HZ(G)$ is a subgroup of $G$. The quotient $G/Z(G)$ is abelian, $HZ(G)/Z(G)$ is normal in $G/Z(G)$ and, therefore, $HZ(G)$ is normal in $G$ (by the Correspondence Theorem between the subgroups of $G$ containing $Z(G)$ and the subgroups of $G/Z(G)$). Finally, as $H \cap Z(G) = \{1\}$ then $HZ(G)$ is an abelian direct product of order $|HZ(G)| = |H||Z(G)|$.
\end{proof}

The following is a very useful result \citep{Neu2012} that is a kind of conditional converse to \citep[Lemma 2.2]{CU}.

\begin{lemma}\textup{\citep{Neu2012}}\label{LemNeuQuoSubTPP} Let $(S, T, U)$ be a subgroup TPP triple of $G$ and $N$ a normal subgroup of $G$ such that $N \subseteq S$. Then the quotient $G/N$ realises the subgroup TPP triple $(S/N,TN/N,UN/N)$ of type $\left(|S|/|N|, |T|, |U|\right)$, and $\rho_0(G/N) \geq \frac{|S||T||U|}{|G|}$. In particular, if $(S, T, U)$ is maximal for $G$ then $\rho_0(G/N) \geq \rho_0(G)$.
\end{lemma}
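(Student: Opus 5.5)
The lemma has two parts: that the quotient triple is a subgroup TPP triple of the stated type, and then the ratio bound. I will prove the first part using the subgroup characterisation in Theorem \ref{HM}, namely
\[ S \cap TU = T \cap U = \{1\}. \]
All the needed facts follow from this and from $N \le S$. Write $\pi : G \to G/N$ for the quotient map. Since $N$ is normal, $TN$ and $UN$ are subgroups containing $N$, and $S/N$, $TN/N$, $UN/N$ are subgroups of $G/N$. So the task is to verify
\[ S/N \cap (TN/N)(UN/N) = TN/N \cap UN/N = \{N\}, \]
and then to compute the orders.

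\textbf{Orders.} Since $N \le S$, we have $T \cap N \le T \cap S = \{1\}$, so $|TN/N| = |T|/|T \cap N| = |T|$. Likewise $U \cap N \le U \cap S = \{1\}$, so $|UN/N| = |U|$. Trivially $|S/N| = |S|/|N|$. This gives the type $(|S|/|N|, |T|, |U|)$.

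\textbf{The TPP conditions.} Both conditions are pullbacks back to $G$.

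First, suppose $sN = tuN$ with $s \in S$, $t \in T$, $u \in U$. Then $tu = sn$ for some $n \in N$, and $sn \in S$ because $N \le S$. Hence $tu \in S \cap TU = \{1\}$. Applying $T \cap U = \{1\}$ to $t = u^{-1}$ gives $t = u = 1$, so $sN = N$. The normality of $N$ lets us write every element of $(TN/N)(UN/N)$ as $tuN$, so this shows
\[ S/N \cap (TN/N)(UN/N) = \{N\}. \]

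Second, suppose $tN = uN$. Then $u^{-1}t \in N \le S$, so $t = u n$ with $n \in N \le S$. This gives $t^{-1} u n = 1$. Rewriting, $n^{-1} = t^{-1}u$, so $n^{-1} \in S \cap TU$, which is $\{1\}$ because $TU = (TU)$ contains $t^{-1}u$. Hence $t^{-1}u = 1$, so $t = u \in T \cap U = \{1\}$, and $TN/N \cap UN/N = \{N\}$.

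\textbf{Ratio bound.} The quotient triple has size $|S||T||U|/|N|$, and $|G/N| = |G|/|N|$. Therefore
\[ \rho_0(G/N) \ge \frac{|S||T||U|/|N|}{|G|/|N|} = \frac{|S||T||U|}{|G|}. \]
If $(S, T, U)$ is maximal for $G$, the right-hand side equals $\rho_0(G)$, which gives $\rho_0(G/N) \ge \rho_0(G)$.

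\textbf{Main obstacle.} The only delicate point is the order computation: one must use $T \cap N = U \cap N = \{1\}$, which is exactly where the hypothesis $N \le S$ and the TPP condition $S \cap T = S \cap U = \{1\}$ combine. The remaining steps are routine coset manipulations.
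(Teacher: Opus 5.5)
Your proof is correct and follows essentially the same route as the paper. Both arguments verify $S/N \cap (TN/N)(UN/N) = TN/N \cap UN/N = \{N\}$ by pulling back to $G$ and using $N \le S$ together with $S \cap TU = T \cap U = \{1\}$, and both read off the type from $T \cap N = U \cap N = \{1\}$. Your explicit pull-back argument for $TN/N \cap UN/N = \{N\}$ is more careful than the paper's, which simply asserts $\overline{T} \cap \overline{U} \cong T \cap U$.
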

\begin{proof} Recall that $S, T, U$, as TPP subgroups, satisfy $S \mathrel \cap TU = T \mathrel \cap U = \{1\}$. As $N \subseteq S$ by assumption then $TU \cap N =\{1\}$. 

\vspace{1mm}

Let $\pi:G \rightarrow G/N$ be the canonical projection of $G$ onto $G/N$, that is, $\pi(g) = gN$ for each $g \in G$, and let $\overline{S}, \overline{T}, \overline{U} \subseteq G/N$ be the images of $S, T, U$ respectively, under $\pi$, in $G/N$, that is, $\overline{S} = \pi(S) = \{sN \mathrel | s \in S\}$, and $\overline{T} = \pi(T) = \{tN \mathrel | t \in T\}$, and $\overline{U} = \pi(U) = \{uN \mathrel | u \in U \}$. Then $\overline{S} = SN/N = S/N$, and $\overline{T} = TN/N \cong T/\{1\} \cong T$, and $\overline{U} = UN/N \cong U/\{1\} \cong U$, and these are all subgroups of $G/N$.

\vspace{1mm}

Letting $1_{G/N} = N$ be the identity of $G/N$ then $\overline{T} \cap \overline{U} \cong T \cap U = \{1\}$, which implies that $\overline{T} \cap \overline{U} = \{1_{G/N}\}$. Consider $\overline{S} \cap \overline{T}\cdot \overline{U} = S/N \cap (TN/N \cdot UN/N) = S/N \cap (TU)N/N$, where $(TU)N/N = \{tuN \mathrel | t \in T, u \in U\}$ is a subset of $G/N$ and not necessarily a subgroup (unless $G/N$ is abelian). Let $xN \in S/N \cap (TU)N/N$. Then $xN = sN = tuN$ for some $s \in S$ and $tu \in TU$ for some $t \in T$ and $u \in U$, which implies that $tu \in sN \cap TU \subseteq S \cap TU = \{1\}$, that is, $tu = 1$, which, by the TPP for $(S, T, U)$, implies that $s = t = u = 1$. This shows that $S/N \cap (TU)N/N = (S \cap T)N/N = N/N \cong \{1\}$ and $\overline{S} \cap \overline{T}\cdot \overline{U} = \{1_{G/N}\}$, where $\overline{T}\cdot\overline{U} \cong TU$ is a set bijection, and that $(\overline{S}, \overline{T}, \overline{U})$ is a subgroup TPP triple of $G/N$ of type $(|S|/|N|, |T|, |U|)$ as claimed.

\vspace{1mm}

The last part of the claim follows by taking $(S, T, U)$ to be a maximal subgroup TPP triple of $G$.
\end{proof}

This has two simple corollaries.

\begin{corollary}\label{Corr1LemNeuQuoSubTPP} Any group $G$ contains a normal subgroup $N$ such that $\rho_0(G/N) \geq \rho_0(G)$.
\end{corollary}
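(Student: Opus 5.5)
The statement is almost trivial, so the plan is to take $N = \{1\}$, the trivial normal subgroup of $G$. It is normal in $G$, and $G/N \cong G$ canonically via $g \mapsto g\{1\}$. Under this isomorphism subgroup TPP triples correspond exactly, since the conditions $S \cap TU = T \cap U = \{1\}$ of Theorem \ref{HM} are preserved by isomorphisms. Hence $\rho_0(G/N) = \rho_0(G)$, which gives the inequality.

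If a less degenerate argument in the spirit of Lemma \ref{LemNeuQuoSubTPP} is wanted, I would proceed as follows.

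\begin{enumerate}
\item Fix a maximal subgroup TPP triple $(S, T, U)$ of $G$, so that $\rho_0(G) = \frac{|S||T||U|}{|G|}$.
\item Let $N$ be the normal core of $S$, namely $N = \bigcap_{g \in G} gSg^{-1}$. This is a normal subgroup of $G$ contained in $S$.
\item Apply Lemma \ref{LemNeuQuoSubTPP} to $N \subseteq S$. The quotient $G/N$ then realises a subgroup TPP triple of type $(|S|/|N|, |T|, |U|)$, so its size ratio is
\[
\frac{(|S|/|N|)|T||U|}{|G|/|N|} = \frac{|S||T||U|}{|G|} = \rho_0(G),
\]
and therefore $\rho_0(G/N) \geq \rho_0(G)$.
\end{enumerate}

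The same argument works with $N$ taken to be any normal subgroup of $G$ lying inside a member of a maximal triple. By permutation invariance, $S$ may be replaced by $T$ or $U$.

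There is no real obstacle in either route. The only point to check is that the core is indeed normal and contained in $S$, and both are standard. In the trivial case, the core may of course be $\{1\}$.
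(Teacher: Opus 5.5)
Your proposal is correct. Your second route is exactly the paper's proof: fix a maximal subgroup TPP triple, take $N = \text{Core}_G(S)$, which is normal and contained in $S$ (the paper adds that you may pick whichever member has the largest core), and apply Lemma \ref{LemNeuQuoSubTPP}. The ratio $\frac{(|S|/|N|)|T||U|}{|G|/|N|}$ is then unchanged.

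Your first route, $N = \{1\}$, is a genuinely different and also valid argument. Since $\rho_0$ is an isomorphism invariant and $G/\{1\} \cong G$, it shows that the statement as literally phrased is trivial. What the core construction adds is the extra information that $N$ lies inside a member of a maximal triple. Whenever some member has a nontrivial core, you therefore get a strictly smaller quotient whose subgroup TPP ratio is at least $\rho_0(G)$. The inductions in Theorems \ref{ThmNilpClassTwo} and \ref{ThmRho0pGroupCycComm} use reductions of this type, quotienting by $S \cap Z(G)$ or by $SZ(G) \cap T$.

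Even the core argument does not guarantee $N \neq \{1\}$: if every member of the chosen maximal triple is core-free, it also returns the trivial subgroup, as the paper itself notes. So neither proof establishes more than the stated, essentially vacuous, conclusion.
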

\begin{proof} Let $(S, T, U)$ be a subgroup TPP triple of $G$ such that $\rho_0(G) = \frac{|S||T||U|}{|G|}$ and $N = \text{Core}_G(X) = \bigcap_{g \in G}g^{-1}Xg$ the normal $G$-core of any member $X \in \{S, T, U\}$, which is the largest normal subgroup of $G$ contained in $X$, formed by the intersection of all the conjugates of $X$ by elements of $G$: any member, $X = S$, say, may be chosen, in which case $N = \text{Core}_G(S)$ and by Lemma \ref{LemNeuQuoSubTPP} $G/N$ realises the subgroup TPP triple $(S/N,TN/N, UN/N)$ of size $(|S||T||U|)/|N|$, and $\rho_0(G/N) \geq \frac{(|S||T||U|)/|N|}{|G|/|N|} = \rho_0(G)$. For the tightest bound $X \in \{S, T, U\}$ can be chosen to maximise the order of $N = \text{Core}_G(X)$.
\end{proof}

Note above that if $N = \text{Core}_G(S) = \{1\}$ then $S$ is a core-free subgroup and $G$ is isomorphic to a transitive subgroup of $\text{Sym}(n)$ where $n = |G:S|$.

\begin{corollary}\label{Corr2LemNeuQuoSubTPP} Let $G$ be a nonabelian group and $(S, T, U)$ a non-trivial subgroup TPP triple of $G$, that is, one such that $|S||T||U| > |G|$. Then 

\vspace{1mm}

\textup{(1)} No member $S, T, U$ contains the commutator subgroup $G'$.

\vspace{1mm}

\textup{(2)} No member $S, T, U$ contains a normal subgroup $N$ of $G$ such that the quotient $G/N$ is abelian.

\vspace{1mm}

\textup{(3)} If $G$ is of nilpotency class $2$ then no member $S, T, U$ contains the centre $Z(G)$.

\vspace{1mm}

\textup{(4)} If $G$ is a $p$-group then no member $S, T, U$ contains the Frattini subgroup $\Phi(G)$.
\end{corollary}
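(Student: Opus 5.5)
The plan is to prove \textup{(2)} first, since the other three parts are special cases of it. Suppose, for contradiction, that some member contains a normal subgroup $N$ of $G$ with $G/N$ abelian; by the permutation invariance of subgroup TPP triples we may take this member to be $S$. Lemma \ref{LemNeuQuoSubTPP} then shows that $G/N$ realises the subgroup TPP triple $(S/N, TN/N, UN/N)$, so that
\[
\rho_0(G/N) \geq \frac{|S||T||U|}{|G|} > 1 .
\]
On the other hand, $G/N$ is abelian, so Theorem \ref{ThmRhoAbelianGroups} gives $\rho_0(G/N) = 1$. These two statements are incompatible, which proves \textup{(2)}. A second route is available: because $G/N$ is abelian, $N$ contains $G'$, so $TN$ is a subgroup normalised by $G$ (its image in $G/N$ is normal). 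I will still use the quotient argument, since it is immediate.

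The remaining parts follow by exhibiting a suitable normal subgroup $N$ in each case.
\begin{itemize}
\item For \textup{(1)}, take $N = G'$. It is normal, and $G/G'$ is abelian.
\item For \textup{(3)}, take $N = Z(G)$. When $G$ has nilpotency class $2$ we have $G' \leq Z(G)$, so $G/Z(G)$ is abelian; this was noted after \eqref{NilpClassTwo}.
\item For \textup{(4)}, take $N = \Phi(G)$. It is characteristic, hence normal, and for a $p$-group $G/\Phi(G)$ is elementary abelian by the Burnside basis theorem. Equivalently, $G' \leq \Phi(G)$, so this case is a direct consequence of \textup{(1)}'s reasoning.
\end{itemize}

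I expect no real obstacle here. The only point to check is that Lemma \ref{LemNeuQuoSubTPP} applies when $N$ is contained in an arbitrary member, not just in $S$; permutation invariance handles this. I should also confirm that the size inequality in that lemma is strict enough: non-triviality gives $|S||T||U|/|G| > 1$, and this ratio is preserved exactly in the quotient.
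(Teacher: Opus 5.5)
Your proof is correct and uses the same mechanism as the paper: pass to the abelian quotient $G/N$ via Lemma \ref{LemNeuQuoSubTPP}, which preserves the ratio $|S||T||U|/|G| > 1$, and contradict $\rho_0(G/N) = 1$ from Theorem \ref{ThmRhoAbelianGroups}. The only difference is bookkeeping: the paper runs this argument for $N = G'$ in part \textup{(1)} and derives \textup{(2)}--\textup{(4)} from the inclusions $G' \leq N$, $G' \leq Z(G)$, $G' \leq \Phi(G)$, whereas you run it for a general $N$ in \textup{(2)} and specialise.
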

\begin{proof} Let $G$ and $(S, T, U)$ be as given.

\vspace{1mm}

\textup{(1)} By Theorem \ref{ThmRhoAbelianGroups}) $\rho_0(H) = \rho(H) = 1$ if $H$ is abelian. The commutator subgroup $G'$ is the minimal normal subgroup of $G$ such that the quotient $G/G'$ is abelian, which means that $\rho_0(G/G') = 1$. Choosing any member, $S$, say, such that $G' \subseteq S$, by Lemma \ref{LemNeuQuoSubTPP} $G/G'$ realises the subgroup TPP triple $(S/G', TG'/G', UG'/G')$ of type $(|S|/|G'|, |T|, |U|)$, and $\frac{|S||T||U|}{|G|} = \frac{(|S||T||U|)/|G'|}{|G|/|G'|} \leq \rho_0(G/G') = 1$, a contradiction. So $S$, and therefore also $T$ and $U$, cannot contain $G'$, from the assumption $|S||T||U| > |G|$.

\vspace{1mm}

\textup{(2)} Follows from \textup{(1)}.

\vspace{1mm}

\textup{(3)} If $G$ is of nilpotency class $2$ then the centre $Z(G)$ contains the commutator subgroup $G'$ and the conclusion follows from \textup{(1)}.

\vspace{1mm}

\textup{(4)} If $G$ is a (nonabelian) $p$-group then the Frattini subgroup $\Phi(G)$ contains a non-trivial commutator subgroup $G'$, and the conclusion follows from \textup{(1)}.
\end{proof}

The following lemma relates the subgroup TPP ratio of a quotient of a group, by some normal subgroup, to the index of its centre, and will be useful.

\begin{lemma}\label{LemQuoNilpClassTwo} If $G$ is a group with a normal subgroup $N$ such that $\rho_0(G/N) \leq \sqrt{|G/N|:|Z(G/N)|}$ then $\rho_0(G/N) \leq \sqrt{|G:Z(G)|}$.
\end{lemma}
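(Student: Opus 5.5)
The plan is to reduce the statement to a purely group-theoretic index comparison. Specifically, I would show that
\begin{equation*}
|G/N : Z(G/N)| \leq |G : Z(G)|
\end{equation*}
for every normal subgroup $N$ of $G$. Once this holds, the hypothesis $\rho_0(G/N) \leq \sqrt{|G/N : Z(G/N)|}$ and the monotonicity of the square root give $\rho_0(G/N) \leq \sqrt{|G:Z(G)|}$ immediately. No TPP-specific input is needed; the subgroup TPP ratio only enters through the hypothesis.

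\emph{Step 1: the image of the centre is central in the quotient.} Take $z \in Z(G)$ and $g \in G$. Then $(zN)(gN) = zgN = gzN = (gN)(zN)$, so $zN \in Z(G/N)$. Hence $Z(G)N/N \leq Z(G/N)$. Here $Z(G)N$ is a subgroup of $G$ because $Z(G)$ is normal.

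\emph{Step 2: compare indices.} A larger subgroup has smaller index, so
\begin{equation*}
|G/N : Z(G/N)| \leq |G/N : Z(G)N/N|.
\end{equation*}
By the Correspondence (Third Isomorphism) Theorem, $|G/N : Z(G)N/N| = |G : Z(G)N|$. Since $Z(G) \leq Z(G)N$, we have $|G : Z(G)N| \leq |G : Z(G)|$. Chaining these gives the desired inequality. One could equally use $|G:Z(G)N| = |G:Z(G)|/|N : N \cap Z(G)|$, but the containment argument suffices.

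\emph{Step 3: conclude.} Combining the hypothesis with Step 2 gives
\begin{equation*}
\rho_0(G/N) \leq \sqrt{|G/N : Z(G/N)|} \leq \sqrt{|G : Z(G)|}.
\end{equation*}

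I expect no real obstacle here; the only point needing care is Step 1, namely remembering that the centre of the quotient can be strictly larger than the image of the centre. That is why the argument gives an inequality rather than an equality, and it is the direction we need. In the intended application, this lemma would presumably be combined with Lemma \ref{LemNeuQuoSubTPP} or Corollary \ref{Corr1LemNeuQuoSubTPP} to transfer a bound proved for a quotient back to $G$ itself.
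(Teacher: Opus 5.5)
Your proposal is correct and takes essentially the same route as the paper: both rest on $Z(G)N/N \leq Z(G/N)$ and then compare orders. The only difference is bookkeeping — the paper writes out $|Z(G)N/N| = |Z(G)|/|Z(G)\cap N|$ and bounds $|Z(G)\cap N| \leq |N|$, whereas you get the same comparison from the correspondence theorem together with $Z(G) \leq Z(G)N$.
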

\begin{proof} Let $G$, $N$ and $\rho_0(G/N)$ be given as above. Note that $Z(G)N/N \leq Z(G/N)$, where $Z(G)N/N \cong Z(G)/(Z(G) \cap N)$ and $|Z(G)N/N| = \frac{|Z(G)|}{|Z(G) \cap N|}$, and therefore $\frac{1}{|Z(G/N)|} \leq \frac{1}{|Z(G)N/N|} = \frac{|Z(G) \cap N|}{|Z(G)|}$. Now
\begin{align*}\rho_0(G/N) &\leq \sqrt{|G/N|:|Z(G/N)|} \\
                          &= \frac{\sqrt{|G|}}{\sqrt{|N|}}\cdot\frac{1}{\sqrt{|Z(G/N)|}} \\
                          &\leq \frac{\sqrt{|G|}}{\sqrt{|N|}}\cdot\frac{1}{\sqrt{|Z(G)N/N|}} \\
                          &= \frac{\sqrt{|G|}}{\sqrt{|N|}}\cdot\frac{\sqrt{|Z(G) \cap N|}}{\sqrt{|Z(G)|}} \\
                          &\leq \frac{\sqrt{|G|}}{\sqrt{|N|}}\cdot\frac{\sqrt{|N|}}{\sqrt{|Z(G)|}} \\
                          &= \sqrt{|G:Z(G)|}.
\end{align*}
\end{proof}

Two commutator identities for groups of nilpotency class $2$ will be used, one following from the other. The first is

\begin{equation}\label{EqCommId} [xy, z] = [x, z][y, z], \hspace{3em}x, y, z \in G \text{ (where $G$ is of nilpotency class $2$)}
\end{equation}

This uses the fact that, in this case, commutators are central and for elements $x, y, z \in G$ the identity is given by $[xy, z] = (xy)^{-1}z^{-1}xyz = y^{-1}x^{-1}z^{-1}xyz = y^{-1}x^{-1}z^{-1}xzz^{-1}yz = y^{-1}[x, z]z^{-1}yz = [x, z]y^{-1}z^{-1}yz = [x, z][y, z]$. From this follows the second

\begin{equation}\label{EqCommId2} [x^p, y] = [x, y]^p.
\end{equation}

For the latter note that by \ref{EqCommId} $[x^p, y] = [x\cdot x^{p - 1}, y] = [x, y][x^{p - 1}, y]$, and repeated application leads to $[x, y]^p$, which can be proved by induction.

\vspace{1mm}

The following proposition is useful to note for some of the later results for $p$-groups.

\begin{proposition}\label{PropRho0p4} If $G$ is a $p$-group of order $\leq p^4$ then $\rho_0(G) = 1$.
\end{proposition}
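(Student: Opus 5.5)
We argue by contradiction: suppose $G$ is a $p$-group with $|G| \leq p^4$ and $(S,T,U)$ is a non-trivial subgroup TPP triple, so $|S||T||U| > |G|$. We show this is impossible. By Theorem \ref{ThmRhoAbelianGroups} we may take $G$ nonabelian, so $|G| \geq p^3$. By Proposition \ref{PropTPPPermSubsets}(2) none of $S,T,U$ is normal. Every subgroup of index $p$ in a $p$-group is normal, and so is the trivial subgroup. Hence each member has order between $p$ and $|G|/p^2$.

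\emph{Case $|G| = p^3$.} Here each member has order exactly $p$, so $|S||T||U| = p^3 = |G|$. The triple is therefore trivial, a contradiction.

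\emph{Case $|G| = p^4$.} Each member has order $p$ or $p^2$. Non-triviality forces $|S||T||U| \geq p^5$, so after permuting (permutation invariance) we may take $|S| = |T| = p^2$ and $|U| \geq p$. We now locate a proper subgroup $H$ containing two members and apply Proposition \ref{PropTPPSplit}.
\begin{itemize}
\item Since $|G| = p^4$, the group $G$ has an abelian maximal subgroup (standard: $|Z(G)| \geq p$, and an abelian subgroup of order $p^3$ exists). This suggests using Proposition \ref{PropTPPPermSubsets}(3): if $N_G(S)$ is abelian of index $p$, then $\frac{|S||T||U|}{|G|} \leq p$. That bound is too weak, however, so a sharper argument is needed.
\item A cleaner route is a counting argument. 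From the TPP we have $T \cap U = 1$ and $S \cap TU = 1$, so $|S||T||U| \leq |G|$ would follow if $S \cdot TU$ is injective onto a subset of $G$. The map $S \times TU \to G$, $(s,x) \mapsto sx$, is injective exactly when $S \cap Q(TU) = 1$. The TPP only gives $S \cap TU = 1$, so in general this direct approach fails.
\item We therefore use the structure of $G$ instead. The group $S$ has order $p^2$ and is non-normal, so $N_G(S)$ has order $p^3$ and $S \geq Z(G)$ whenever $|Z(G)| = p$ and $Z(G) \leq S$. More carefully: in a $p$-group every non-trivial normal subgroup meets $Z(G)$ non-trivially.
\end{itemize}

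The key claim is that each of $S$ and $T$ contains $Z(G)$ or $G'$, or else the pair generates a proper subgroup. Any two subgroups of order $p^2$ lie in a common maximal subgroup whenever $S\Phi(G)$ and $T\Phi(G)$ are proper. Since $|G/\Phi(G)| \leq p^3$ for nonabelian $G$ of order $p^4$, the images $\overline{S}, \overline{T}$ in $G/\Phi(G)$ are small, so typically $\langle S, T\rangle\Phi(G) < G$ and hence $\langle S,T\rangle \leq M$ for some maximal $M$ of order $p^3$. Proposition \ref{PropTPPSplit} then gives $\frac{|S||T||U|}{|G|} \leq \rho_0(M) = 1$ by the $p^3$ case, a contradiction. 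The remaining subcase is where $S$ and $T$ generate $G$ modulo $\Phi(G)$. There, $S \cap \Phi(G)$ and $T \cap \Phi(G)$ must be handled via Corollary \ref{Corr2LemNeuQuoSubTPP}(4), which says no member contains $\Phi(G)$, together with the fact that non-trivial normal subgroups meet $Z(G)$.

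This last subcase is the main obstacle. I expect it to require a case split on $|G/\Phi(G)| \in \{p^2, p^3\}$, possibly supplemented by checking the known list of groups of order $p^4$.
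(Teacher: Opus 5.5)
Your reductions are sound. Non-normality of the members (Proposition~\ref{PropTPPPermSubsets}(2)) together with normality of index-$p$ subgroups settles $|G|=p^3$. It also reduces $|G|=p^4$ to $|S|=|T|=p^2$, $|U|\geq p$, which is exactly the parameter type the paper has to exclude. But the proof stops there, and the route you sketch cannot close it. The TPP gives $S\cap T\subseteq S\cap TU=\{1\}$, so $|ST|=|S||T|=p^4$, i.e.\ $ST=G$ and $\langle S,T\rangle=G$ \emph{always} in this configuration. So the case you call typical, where $\langle S,T\rangle$ lies in a maximal subgroup and Proposition~\ref{PropTPPSplit} applies, never occurs. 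The subcase you leave open is the only one. Separately, your claim that $S\Phi(G)<G$ and $T\Phi(G)<G$ force $\langle S,T\rangle\Phi(G)<G$ is false. In $D_8\times C_2$, with $r$ of order $4$, $s$ a reflection and $c$ generating $C_2$, the subgroups $\langle r\rangle$ and $\langle s,c\rangle$ have proper products with $\Phi(G)$, yet their images span $G/\Phi(G)\cong C_2^3$.

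The missing step is a one-line count, and no case split on $|G/\Phi(G)|$ or appeal to the classification of groups of order $p^4$ is needed. Since $ST=G$, any $u\in U$ can be written $u=st$, so $stu^{-1}=1$ and the TPP forces $u=1$. Hence $U=\{1\}$, contradicting $|U|\geq p$. Equivalently, the TPP gives $ST\cap SU=S$, so $|S|(|T|+|U|-1)=|ST\cup SU|\leq|G|$, which fails for type $(p^2,p^2,p)$. This is the kind of inequality from \citep[Observation 3.1]{Neu2011} that the paper invokes to cut the possible proper types down to $(p,p,p)$ for order $p^3$ and $(p^2,p,p)$ for order $p^4$, both of size at most $|G|$. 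The paper also gives an alternative: nonabelian groups of order $p^3$ or $p^4$ have an abelian normal subgroup of index $p$, so \citep[Corollary 4.3]{Mur} applies. Your exploratory count via $S\times TU\to G$ was aimed at the wrong product; here it is $ST$ that already exhausts $G$.
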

\begin{proof} All $p$-groups of order $\leq p^2$ are abelian and realise $\rho_0 = \rho = 1$. For nonabelian $p$-groups of order $p^3$ and $p^4$ the result follows from combinatorial constraints on parameters of TPP triples imposed by the inequalities in \citep[Observation 3.1]{Neu2011} by which the possible parameter types for subgroup TPP triples of groups of these orders, where all members are non-trivial proper subgroups, are $(p, p, p)$ and $(p^2, p, p)$ respectively, in both cases implying $\rho_0 = 1$. Some simple group-theoretic arguments also exist here. Nonabelian $p$-groups of order $p^3$, which are extraspecial, and those of order $p^4$, contain abelian (and normal) subgroups of index $p$ (see \citep[Lemma 3.4]{Cra} for $p$-groups of order $p^3$, and for $p$-groups of order $p^4$ see \citep[Lemma 2]{Wil} for $p = 2$, and \citep[Proposition 12]{AGW} for $p > 2$). So for these groups $\rho_0 = 1$ by \citep[Corollary 4.3]{Mur}.
\end{proof}

For $p$-groups with a cyclic commutator subgroup of order $p$, or $p$-groups of nilpotency class $2$ that satisfy a large centre assumption $|G:Z(G)| \leq p^3$, some basic results are required, all of which are known or can be deduced from the basic theory of (finite) $p$-groups.

\begin{lemma}\label{LemCycNormSubOrderp} If $N$ is a normal subgroup of a nonabelian $p$-group $G$ such that $N$ is cyclic of order $p$ then $N$ is central, that is, $N \trianglelefteq G$ and $N \cong \mathbb{Z}_p$ implies that $N \leq Z(G)$. In particular, if $G$ is a nonabelian $p$-group containing a cyclic commutator subgroup $G'$ of order $p$ then $G'$ is central and $G$ is of nilpotency class $2$.
\end{lemma}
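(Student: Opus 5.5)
The plan is the standard conjugation-action argument: a normal subgroup of order $p$ is a union of conjugacy classes, and in a $p$-group every non-identity class in $N$ is forced to be a singleton. Let $N \trianglelefteq G$ with $N \cong \mathbb{Z}_p$. Since $N$ is normal, $G$ acts on $N$ by conjugation, and $N$ is a union of $G$-conjugacy classes. Each such class has size $|G:C_G(x)|$, which divides $|G|$ and is therefore a power of $p$. The class of $1$ is $\{1\}$, so the remaining $p-1$ elements of $N$ are partitioned into classes whose sizes are powers of $p$. Since $p-1 < p$, every one of these classes has size $p^0 = 1$. Hence every element of $N$ is central, and $N \leq Z(G)$.

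An alternative route is to note that $N \cap Z(G) \neq \{1\}$, the standard fact that a non-trivial normal subgroup of a $p$-group meets the centre non-trivially. Because $|N| = p$ is prime, this gives $N \cap Z(G) = N$. I prefer the direct class-counting argument, since it is self-contained. A third option is that the conjugation action induces a homomorphism $G \to \mathrm{Aut}(N) \cong C_{p-1}$; its image is a $p$-group of order dividing $p-1$, hence trivial.

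For the \lq\lq in particular\rq\rq\ statement, the commutator subgroup $G'$ is characteristic and hence normal. If $G'$ is cyclic of order $p$, the first part gives $G' \leq Z(G)$. Since $G$ is nonabelian, $G' \neq \{1\}$ and $Z(G) < G$, so $\{1\} < G' \leq Z(G) < G$. This is exactly condition \eqref{NilpClassTwo}, so $G$ has nilpotency class $2$; in central-series terms, $G/Z(G)$ is abelian and nontrivial.

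No step is a real obstacle. The only points needing care are that class sizes are powers of $p$ (orbit–stabiliser) and that the fixed class $\{1\}$ leaves exactly $p-1$ elements to distribute.
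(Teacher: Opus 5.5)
Your proof is correct and follows essentially the same route as the paper. Both use the conjugation action of $G$ on the normal subgroup $N$: class sizes are powers of $p$, and the fixed class $\{1\}$ leaves only $p-1$ elements, so every class in $N$ is a singleton; applying this to $G'$ then gives $\{1\} < G' \leq Z(G) < G$. Your explicit count (no class of size $p$ fits into the remaining $p-1$ elements) states a little more cleanly the step the paper phrases as each class having size $1$ or $p$.
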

\begin{proof} Let $G$ and $N$ be as given. If $G$ is abelian $Z(G) = G$ and there is nothing to prove. So suppose $G$ is nonabelian. Then $G$ acts on the elements of $N$ by conjugation, given by $n^g \longmapsto g^{-1}ng = n'$ for elements $n \in N$ and $g \in G$, where $n' \in N$ depends on $n$, and produces conjugacy classes $n^G = \{g^{-1}ng \mathrel | g \in G\}$ for $n \in N$, that partition $N$. The number $k$ of these conjugacy classes satisfies $1 \leq k \leq p = |N|$, as there is a trivial singleton conjugacy class $1^G = \{1\}$, and each conjugacy class size $|n^G|$ must divide $|G|$ and is therefore either $1$ or $p$. This implies that every non-identity element of $N$ also forms a singleton conjugacy class, that is, all elements of $N$ are self-conjugate: $g^{-1}ng = n$ for any $n \in N$ and $g \in G$. This is equivalent to $[n, g] = n^{-1}g^{-1}ng = 1$ for all $g \in G$ and $n \in G$, that is, all elements of $N$ commute with all elements of $G$, formally, $[N, G] = \{1\}$, which proves that $N \leq Z(G)$. The second part of the claim follows by substituting $G'$ for $N$, in which case there is a central series for $G$ given by $1 < G' \leq Z(G) < G$ of length $2$.
\end{proof}

\begin{lemma}\label{LemNonAbCentQuo} If $G$ is a nonabelian group then $G/Z(G)$ cannot be cyclic.
\end{lemma}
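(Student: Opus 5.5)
The plan is to argue by contraposition. Suppose $G/Z(G)$ is cyclic; I will show that $G$ must then be abelian, which contradicts the hypothesis.

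First, pick an element $g \in G$ whose coset $gZ(G)$ generates $G/Z(G)$. Every coset of $Z(G)$ is then a power $g^kZ(G)$. So every element of $G$ can be written as $x = g^k z$ for some integer $k$ and some $z \in Z(G)$.

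Next, take two arbitrary elements $x = g^k z$ and $y = g^m w$ with $z, w \in Z(G)$. Since $z$ and $w$ are central, and powers of $g$ commute with each other, a direct computation gives
\begin{equation*}
xy = g^k z g^m w = g^{k+m} z w = g^{m+k} w z = g^m w g^k z = yx.
\end{equation*}
Hence $G$ is abelian. This contradicts the assumption that $G$ is nonabelian, so $G/Z(G)$ cannot be cyclic.

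There is no real obstacle here. The only point needing care is that the decomposition $x = g^k z$ is correct: it follows because $x \in xZ(G) = g^kZ(G)$.

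As a remark for later use, the lemma implies that in a nonabelian group $|G:Z(G)|$ is never prime. In particular, for a nonabelian $p$-group we get $|G:Z(G)| \geq p^2$, which is the lower bound appearing in the "large centre" condition $p^2 \leq |G:Z(G)| \leq p^3$.
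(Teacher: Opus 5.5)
Your proof is correct and follows essentially the same route as the paper. You write every element as $g^k z$ with $z \in Z(G)$, using a generator $gZ(G)$ of the cyclic quotient, and then verify directly that any two such elements commute. Your closing remark that $|G:Z(G)|$ is never prime, and hence is at least $p^2$ for a nonabelian $p$-group, is also correct.
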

\begin{proof}
Suppose $G$ is nonabelian. If $G/Z(G)$ is cyclic, say, of order $n = |G:Z(G)|$, then $G/Z(G) = \langle g_0Z(G) \rangle = \{(g_0Z(G))^i = g_0^iZ(G) \mathrel | i \in \mathbb{Z}_n\}$ for some $g_0 \in G\backslash\{1\}$, and $G = \bigcup_{i \in \mathbb{Z}_n}g_0^iZ(G)$, which means that any element in $G$ can be written as $g_0^iz$ for some $i \in \mathbb{Z}_n$ and $z \in Z(G)$ depending on the element. If $x, y \in G$ are any two elements then $x = g_0^iz_x$ and $y = g_0^jz_y$ for some $i, j \in \mathbb{Z}_n$ and $z_x, z_y \in Z(G)$, and $xy = g_0^iz_x \cdot g_0^jz_y = g_0^ig_0^jz_xz_y = g_0^{i + j}z_xz_y = g_0^jg_0^iz_yz_x = g_0^jz_y \cdot g_0^iz_x = yx$, which implies that $G$ is abelian, a contradiction.
\end{proof}

\begin{proposition}\label{PropAbCentQuop2p3} If $G$ is a $p$-group such that $G/Z(G)$ is elementary abelian of order $p^2$ or abelian of order $p^3$ then $G$ contains abelian (maximal, normal) subgroups of index $p$.
\end{proposition}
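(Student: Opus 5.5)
We want: if $G$ is a $p$-group with $G/Z(G)$ elementary abelian of order $p^2$, or abelian of order $p^3$, then $G$ has an abelian subgroup of index $p$. The plan is to exhibit such a subgroup explicitly as $A = \langle x, Z(G)\rangle = \langle x\rangle Z(G)$ for a well-chosen $x \in G \setminus Z(G)$. First note that $G$ is nonabelian, since $Z(G) < G$. Any subgroup of this form is abelian, because $x$ commutes with itself and with $Z(G)$. Since $A$ contains $Z(G)$, it is also normal in $G$: its image $A/Z(G)$ in the abelian quotient $G/Z(G)$ is normal, and we pull back via the Correspondence Theorem, as in Observation \ref{ObsNilpClassTwo}. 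So the whole problem reduces to choosing $x$ with $|A/Z(G)| = |G:Z(G)|/p$, i.e. with $xZ(G)$ of the right order in $G/Z(G)$.

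\emph{Case $G/Z(G) \cong C_p \times C_p$.} Every non-identity element $xZ(G)$ has order $p$. Hence $A/Z(G) = \langle xZ(G)\rangle$ has order $p$, and $|G:A| = p^2/p = p$. So any $x \in G\setminus Z(G)$ works, and $A$ is maximal because it has prime index.

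\emph{Case $|G/Z(G)| = p^3$ abelian.} By Lemma \ref{LemNonAbCentQuo}, $G/Z(G)$ is not cyclic, so it is isomorphic to $C_{p^2}\times C_p$ or to $C_p^3$.
\begin{itemize}
\item If $G/Z(G) \cong C_{p^2}\times C_p$, take $xZ(G)$ of order $p^2$. Then $A = \langle x\rangle Z(G)$ is abelian with $|A:Z(G)| = p^2$, so $|G:A| = p$.
\item If $G/Z(G)$ is elementary abelian of order $p^3$, then $\langle x\rangle Z(G)$ only has index $p^2$. In this case we must instead find $x, y \in G$ such that $[x,y]=1$ and $xZ(G), yZ(G)$ are independent in $G/Z(G)$. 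Then $A=\langle x, y, Z(G)\rangle$ is abelian of index $p$.
\end{itemize}

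The last sub-case is the main obstacle. The first attempt would be a centraliser argument. For $x \notin Z(G)$, the map $g \mapsto [x,g]$ is a homomorphism $G \to G'$ by identity (\ref{EqCommId}), and its kernel is $C_G(x)$. So $|G:C_G(x)| = |[x,G]|$. We need some $x$ with $|C_G(x):Z(G)| \geq p^2$, i.e. $|[x,G]| \le p$.

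This step is where I expect trouble, because it can genuinely fail. Consider the commutator pairing on $V = G/Z(G) \cong \mathbb{F}_p^3$ into $G'$, and suppose it is "generic", as in a free class-$2$ group of exponent $p$ on $3$ generators, where $G' \cong C_p^3$. Then no two independent elements of $V$ commute. In that group every abelian subgroup has index at least $p^2$; the group has order $p^6$ and centre of index $p^3$.

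So I would first check whether the intended statement carries an extra hypothesis that makes the commutator map degenerate enough, for example $|G'| = p$ or $|G'| \le p^2$. With $|G'| \le p^2$ the map $x \mapsto [x,\cdot]$ from $V$ into $\mathrm{Hom}(V, G')$ forces a nontrivial commuting pair by a dimension count on the alternating form. Lacking such a hypothesis, I would prove the claim as stated for the cases $C_p\times C_p$ and $C_{p^2}\times C_p$, and flag the elementary abelian $p^3$ case as requiring $|G'| \le p^2$. Under that hypothesis the alternating bilinear map $V\times V\to G'$ is determined by the values $[x_1,x_2],[x_1,x_3],[x_2,x_3]$. These are $3$ vectors in a space of dimension $\le 2$, so they are linearly dependent, and a dependence relation yields an $x$ with $|[x,G]|\le p$, whose centraliser then supplies the commuting pair we need.
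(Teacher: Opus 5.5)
In the cases $G/Z(G)\cong C_p\times C_p$ and $G/Z(G)\cong C_{p^2}\times C_p$ your argument is the paper's. You pull back a cyclic subgroup of index $p$ in $G/Z(G)$ to $\langle x\rangle Z(G)$, which is abelian because it is generated by a single element over the centre.

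In the elementary abelian case of order $p^3$ the defect is in the statement and in the paper's proof, not in your proposal. The paper only asserts that the preimage $K$ of a subgroup $L\cong C_p\times C_p$ ``is generated by $Z(G)$ and two non-central commuting elements''. This tacitly treats preimages of abelian subgroups of $G/Z(G)$ as abelian, which fails as soon as $L$ needs two generators. Your counterexample is valid, with one caveat: as phrased (``exponent $p$'') it needs $p$ odd, since groups of exponent $2$ are abelian. A version that works for every prime is $G=\mathbb{Z}_p^3\times\mathbb{Z}_p^3$, with the second factor indexed by pairs $i<j$ and multiplication $(u,a)(v,b)=(u+v,\,a+b+c(u,v))$, where $c(u,v)_{ij}=u_iv_j$. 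Then $[(u,a),(v,b)]=(0,\,u\wedge v)$ in Pl\"ucker coordinates. Hence $Z(G)=G'\cong C_p^3$ and $G/Z(G)\cong C_p^3$, and two elements commute only if their images in $G/Z(G)$ are linearly dependent. Every abelian subgroup therefore has index at least $p^2$.

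Your repair is correct, and the extra hypothesis is also necessary.
\begin{itemize}
\item Since $x^p\in Z(G)$, we have $[x,y]^p=[x^p,y]=1$. So commutation induces a surjective linear map $\Lambda^2V\to G'$, where $V=G/Z(G)$ and $\dim\Lambda^2V=3$.
\item An abelian subgroup $A$ of index $p$ must contain $Z(G)$, since otherwise $AZ(G)=G$ would be abelian. So such subgroups correspond exactly to planes in $V$ on which the pairing vanishes.
\item If $|G'|\le p^2$, the kernel of $\Lambda^2V\to G'$ is non-zero. Every non-zero element of $\Lambda^2$ of a $3$-dimensional space is decomposable, $u\wedge v$ with $u,v$ independent, and lifting $u,v$ gives your commuting pair at once. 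This is the step your ``a dependence relation yields an $x$'' leaves implicit, and it makes the centraliser detour unnecessary.
\item If $|G'|=p^3$, the map is injective and no such plane exists.
\end{itemize}
So in this case the proposition holds exactly when $|G'|\le p^2$.

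This also breaks the paper's proof of Theorem \ref{ThmRho0pGroupLargeCentre} for such groups. With it goes the second case of the theorem in Section \ref{SecGroupsOfTypeIV}, since your example has $|G:Z(G)|=p^3$ and $\text{cd}(G)=\{1,p\}$.
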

\begin{proof} Let $G$ be as given, in which case $G$ is of nilpotency class $2$. Let $G/Z(G)$ be elementary abelian of order $p^2$. Then $G/Z(G)$ is isomorphic to $C_p \times C_p$, and contains an abelian maximal normal subgroup, $L$, say, of index $p$ that is cyclic of order $p$ and generated by some element $xZ(G)$ of order $p$. By the Correspondence Theorem the subgroup of $G$ corresponding to $L$ is an abelian maximal normal subgroup $K$ of index $p$ that is generated by $Z(G)$ and some non-central element $x \in G\backslash Z(G)$ of order $\geq p$, where $K$ is such that $K = C_G(x)$ where $C_G(x)$ is the centraliser of $x$ in $G$. \textup{(2)}. Let $G/Z(G)$ be abelian of order $p^3$. Then $G/Z(G)$ is isomorphic to either $C_{p^2} \times C_p$ or $C_p \times C_p \times C_p$. If $G/Z(G) \cong C_{p^2} \times C_p$ then $G$ contains an abelian maximal normal subgroup $K$ of index $p$ corresponding to an abelian maximal normal subgroup $L$ of $G/Z(G)$ of index $p$ where $L$ is isomorphic to $C_{p^2}$, in which case $K$ is generated by $Z(G)$ and some non-central element $x \in G \backslash Z(G)$ of order $\geq p^2$, where $K = C_G(x)$. If $G/Z(G) \cong C_p \times C_p \times C_p$ then $L$ is isomorphic to $C_p \times C_p$, and $K$ is generated by $Z(G)$ and two non-central commuting elements $x, y \in G \backslash Z(G)$ of order $\geq p$, where $K = C_G(\{x, y\})$.
\end{proof}

For the case of extraspecial groups, a little lemma on a non-generating property of two "large" abelian subgroups will be useful.

\begin{lemma}\label{LemExspSub} Let $G$ be an extraspecial group, which is necessarily of order $p^{1 + 2n}$ for some $n \geq 1$. If $S$ and $T$ are two subgroups of $G$ of order $p^n$ then $S$ and $T$ do not generate $G$, that is, $\langle S, T \rangle < G$.
\end{lemma}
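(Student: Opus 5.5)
The natural approach is to pass to the central quotient. Since $G$ is extraspecial, $Z(G)=G'=\Phi(G)$ has order $p$, and $V:=G/Z(G)$ is elementary abelian of order $p^{2n}$. I would regard $V$ as a $2n$-dimensional $\mathbb{F}_p$-space with the commutator form $(xZ,yZ)\mapsto[x,y]\in Z(G)\cong\mathbb{F}_p$. By \eqref{EqCommId} this form is bilinear and alternating, and it is non-degenerate because the radical of the form is $Z(G)/Z(G)$.

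The key observation is that $\langle S,T\rangle=G$ if and only if the images $\overline S=SZ/Z$ and $\overline T=TZ/Z$ span $V$. One direction is clear. For the other, $\Phi(G)=Z(G)$ consists of non-generators, so spanning $V$ forces generation. The planned steps are as follows.

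\textup{(1)} If $S\cap Z(G)\neq\{1\}$, then $Z(G)\le S$ and $\dim\overline S=n-1$. Hence $\dim(\overline S+\overline T)\le 2n-1<2n$, so $\langle S,T\rangle Z(G)<G$ and therefore $\langle S,T\rangle<G$. The same argument applies if $T$ meets $Z(G)$.

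\textup{(2)} If $S\cap Z(G)=T\cap Z(G)=\{1\}$, then Observation~\ref{ObsNilpClassTwo} shows that $S$ and $T$ are abelian. Consequently $\overline S$ and $\overline T$ are totally isotropic of dimension $n$, that is, Lagrangian subspaces of $V$. Here the conclusion $\langle S,T\rangle<G$ holds exactly when $\overline S\cap\overline T\neq0$.

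The main obstacle is case \textup{(2)}. In fact I expect the statement to fail there as written, because transversal Lagrangians exist. Concretely, take $p$ odd and let $G$ be the Heisenberg group of order $p^3$ and exponent $p$, so $n=1$. Put $S=\langle x\rangle$ and $T=\langle y\rangle$ with $[x,y]$ generating $Z(G)$. Both subgroups have order $p=p^n$, yet $\langle S,T\rangle=G$. For general $n$, the analogous choice is $S=\langle x_1,\dots,x_n\rangle$ and $T=\langle y_1,\dots,y_n\rangle$ for a symplectic basis of $V$.

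So my plan would be to prove the lemma under whichever extra hypothesis the later TPP application actually supplies. One option is that at least one of $S,T$ contains $Z(G)$, which is case \textup{(1)}. Another is that $\overline S\cap\overline T\neq0$, for instance when $S\cap TZ(G)\neq\{1\}$, as might follow from the TPP condition $S\cap TU=\{1\}$ together with some information about $U$. Under either hypothesis the dimension count in $V$ above completes the argument. Without such a hypothesis I would not attempt to prove the statement, and would instead record the counterexample.
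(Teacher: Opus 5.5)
Your conclusion is correct: the lemma is false as stated, and your counterexample works. For $p=2$ you can equally take two non-commuting reflections in $D_8$; each has order $2=p^n$ and together they generate $D_8$. Your reduction is also right. If $S$ or $T$ meets $Z(G)$, it contains $Z(G)$, and a dimension count in $V=G/Z(G)$ gives $\langle S,T\rangle<G$. Otherwise $\overline S$ and $\overline T$ are Lagrangian, and since $Z(G)=\Phi(G)$, $\langle S,T\rangle=G$ exactly when $\overline S\cap\overline T=0$, i.e.\ when $SZ(G)\cap T=\{1\}$.

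One small precision concerns your general-$n$ version. It needs a group in which a Lagrangian basis lifts to commuting elements of order $p$, for example exponent $p$ when $p$ is odd, or $D_8*\cdots*D_8$ when $p=2$. In $Q_8$, and for odd $p$ in the extraspecial group of order $p^3$ and exponent $p^2$, the statement actually holds, because all elements of order $p$ lie in a proper subgroup. A single counterexample of course suffices.

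The paper's proof goes wrong at a specific step. Its opening reductions are fine: $S\cap Z(G)=SZ(G)\cap T=\{1\}$, $S$ and $T$ are abelian, and $|S\cap P_1|\,|SP_1\cap T|=p^2$. It then asserts that a non-identity $x_1\in S\cap P_1$ satisfies $x_1\in Z(\langle x_1,P_2,\dots,P_n\rangle)\le Z(P_1*\cdots*P_n)=Z(G)$. This inclusion is false. The centre of a proper subgroup need not lie in $Z(G)$, and nothing forces $x_1$ to commute with $P_1$. For $n=1$ the claim reads $\langle x_1\rangle\le Z(G)$, which your example refutes. The following step is also unjustified: from $[sx_1,T]=\{1\}$ and (\ref{EqCommId}) one only gets $[s,t]=[x_1,t]^{-1}$ for $t\in T$, not $[s,T]=\{1\}$.

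You plan to add a hypothesis, but neither of yours is available where the lemma is used. In case (i.b) of Theorem~\ref{ThmRho0pGroupCycComm}, one has $S\cap Z(G)=T\cap Z(G)=\{1\}$, because by Corollary~\ref{Corr2LemNeuQuoSubTPP} no member of a non-trivial triple contains $Z(G)$. One also has $SZ(G)\cap T=\{1\}$ by construction. This is precisely the transversal case, where the lemma's conclusion is false.

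The configuration really occurs. Realise $2^{1+4}_{+}=D_8*D_8$ as $\{(a,b,c) : a,b\in\mathbb{F}_2^2,\ c\in\mathbb{F}_2\}$ with product $(a,b,c)(a',b',c')=(a+a',b+b',c+c'+a\cdot b')$, where $a\cdot b'$ is the dot product. Take $S=\{(a,0,0)\}$, $T=\{(0,b,0)\}$ and $U=\{(a,(a_2,a_1),a_1a_2+a_1+a_2) : a=(a_1,a_2)\in\mathbb{F}_2^2\}$. These are subgroups of order $4$ with $T\cap U=\{1\}$. Moreover $S\cap TU=\{1\}$, because $TU=\{(a,b+(a_2,a_1),a_1a_2+a_1+a_2)\}$ and $a_1a_2+a_1+a_2=0$ only for $a=0$. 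So they form a subgroup TPP triple of ratio $2$, matching the value $\rho_0=2$ tabulated for [32, 49], yet $\langle S,T\rangle=G$. The final step of that theorem therefore needs an argument that actually uses $S\cap TU=\{1\}$, not merely a non-generation property of $S$ and $T$.
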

\begin{proof} Let $G$, $S$ and $T$ be as given. Then $G$ is an (internal) central product $P_1 * \cdots * P_n$ of $n$ extraspecial factor subgroups $P_1,\ldots,P_n$ each of order $p^3$, which have these properties: each is normal in $G$ as each contains $G'$, each satisfies $P_i' = Z(P_i) = \Phi(P_i) \cong \mathbb{Z}_p$ so that the quotient $G/P_i$ is (elementary) abelian, and any (distinct) pair of them centralise each other, that is, $[P_i, P_j] = \{1\}$ for $1 \leq i, j \leq n$ such that $i \neq j$.\footnote{The main references for central products, $p$-groups and extraspecial groups are \citep[Chapter 3]{Cra} and \citep[Chapter 4]{Ber}} It is also the case that $G$ is of nilpotency class $2$ since $\{1\} < G' = P'_i = Z(P_i) = Z(G) < G$, where $P_i$ is any of the extraspecial factors.

\vspace{1mm}

Suppose $S$ and $T$ generate $G$. As $G' = Z(G)$ the quotient $G/Z(G)$ is (elementary) abelian, and the products $SZ(G)$ and $SZ(G)T = STZ(G)$ are normal subgroups of $G$, and $G = \langle S, T \rangle = SZ(G)T$. The order of $G$ is $|G| = |SZ(G)T| = \frac{|SZ(G)||T|}{|SZ(G) \cap T|} = \frac{|S||Z(G)||T|}{|S \cap Z(G)||SZ(G) \cap T|} = \frac{p^{2n + 1}}{|S \cap Z(G)||SZ(G) \cap T|}$, which implies that $S \cap Z(G) = SZ(G) \cap T = \{1\}$, and in particular $S \cap T = \{1\}$. Transposing $S$ and $T$ implies $T \cap Z(G) = TZ(G) \cap S = \{1\}$. Also, $S$ and $T$ are abelian since, to take $S$ as an example, $S' = [S, S] \leq S \cap G' = S \cap Z(G) = \{1\}$. As $G$ is nonabelian and $G' = Z(G)$ is cyclic of order $p$ then $[S, T] > \{1\}$ and $[S, T] = G' = Z(G)$.

\vspace{1mm}

For an extraspecial factor, $P_1$, say, consider the subgroup $SP_1T = G$. Then $|G| = |SP_1T| = \frac{|S||P_1||T|}{|S \cap P_1||SP_1 \cap T|} = \frac{p^{2n + 3}}{|S \cap P_1||SP_1 \cap T|}$, which implies $|S \cap P_1||SP_1 \cap T| = p^2$ and the subsets $S \cap P_1$ and $SP_1 \cap T$ are non-empty. Suppose $S \cap P_1$ is non-empty. Then a non-identity element $x_1 \in S \cap P_1$ can be chosen, where $x_1 \not\in Z(G)$, such that $\{1\} = [x_1, P_2] = \cdots = [x_1, P_n]$ and $x_1 \in Z(\langle x_1, P_2, \ldots, P_n \rangle) \leq Z(P_1* \cdots * P_n) = Z(G)$, a contradiction. Thus, $S \cap P_1 = \{1\}$ and in the same way it can be shown that $T \cap P_1 = \{1\}$. Then $|SP_1 \cap T| = p^2$ and a non-identity element $y_1 \in SP_1 \cap T$ can be chosen, where $y_1 \not\in SZ(G)$. The element $y_1$ must be some $y_1 = sx_1$ for some non-identity elements $s \in S$ and $x_1 \in P_1$, where $x_1 \not\in Z(G)$. The commutator identity \ref{EqCommId} yields $\{1\} = [y_1, T] = [sx_1, T] = [s, T][x_1, T]$, which implies that $[s, T] = \{1\}$, where $S$ is abelian and $[s, S] = \{1\}$. Then $s$ centralises $S$ and $T$, which generate $G$, and so $s \in Z(G)$, a contradiction. Then $|SP_1 \cap T| = 1$, which is also a contradiction.
\end{proof}

\section{Groups of Nilpotency Class $2$}\label{SecGroupsOfTypeI}

The subgroup TPP ratio data for groups of nilpotency class $2$ \citep[subset of Tables 1-4]{HM} suggests an inexact (or strict) upper bound of $\sqrt{|G:Z(G)|}$. To minimise duplication Table 1 below summarises the relevant data for all groups of nilpotency class $2$ where $\rho_0$ is known that are \emph{not} $p$-groups with cyclic commutator subgroups of order $p$ (Tables \ref{tbl:TblRho0GroupsOfTypeIIa} and \ref{tbl:TblRho0GroupsOfTypeIIb}), with the group-theoretic data obtained from the GAP computer algebra system \citep{GAP}.

\begin{table}[H]
  \begin{center}
    \caption{Known subgroup TPP ratio data ($\rho_0$) for groups of nilpotency class $2$ which do not have a cyclic commutator subgroups of order $p$}
    \label{tbl:TblRho0GroupsOfTypeI}
    \begin{tabular}{lllllll}
      $\bm{|G|}$ & \multicolumn{1}{l}{\textbf{GAP ID}} & \multicolumn{1}{l}{\textbf{Structure Description}} & \multicolumn{1}{l}{$\bm{|Z(G)|}$} & \multicolumn{1}{l}{$\bm{\sqrt{|G:Z(G)|}}$} & \multicolumn{1}{l}{$\bm{\textbf{cd}(G)}$} & \multicolumn{1}{l}{$\bm{\rho_0}$} \\
      \toprule
        \multirow{2}{*}{$24$} &
            [24, 10] & $C_3 \times D_8$ & $6$ & $2$ & $\{1, 2\}$ & $1$ \\
          & [24, 11] & $C_3 \times Q_8$ & $6$ & $2$ & $\{1, 2\}$ & $1$ \\
        \hline
        \multirow{9}{*}{$32$} & 
            [32, 28] & $(C_4 \times C_2 \times C_2) \rtimes C_2$ & $4$ & $2\sqrt{2}$ & $\{1, 2\}$ & $1$ \\
        & [32, 29] & $(C_2 \times Q_8) \rtimes C_2$ & $4$ & $2\sqrt{2}$ & $\{1, 2\}$ & $1$ \\
        & [32, 30] & $(C_4 \times C_2 \times C_2) \rtimes C_2$ & $4$ & $2\sqrt{2}$ & $\{1, 2\}$  & $1$ \\
        & [32, 31] & $(C_4 \times C_4) \rtimes C_2$ & $4$ & $2\sqrt{2}$ & $\{1, 2\}$ & $1$ \\
        & [32, 32] & $(C_2 \times C_2) . (C_2 \times C_2 \times C_2)$ & $4$ & $2\sqrt{2}$ & $\{1, 2\}$ & $1$ \\
        & [32, 33] & $(C_4 \times C_4) \rtimes C_2$ & $4$ & $2\sqrt{2}$ & $\{1, 2\}$ & $1$ \\
        & [32, 34] & $(C_4 \times C_4) \rtimes C_2$ & $4$ & $2\sqrt{2}$ & $\{1, 2\}$ & $1$ \\
        & [32, 35] & $C_4 \rtimes Q_8$ & $4$ & $2\sqrt{2}$ & $\{1, 2\}$ & $1$ \\
        \hline
        \multirow{1}{*}{$64$} & 
            [64, 226] & $D_8 \rtimes D_8$ & $4$ & $4$ & $\{1, 2, 4\}$ & $2$ \\
        \hline
        \multirow{5}{*}{$128$} & 
            [128, 1135] & $(C_2 \times C_2 \times C_2 \times D_8) \rtimes C_2$ & $8$ & $4$ & $\{1, 2, 4\}$ & $2$ \\
         & [128, 1142] & $(C_2 \times C_2 \times ((C_4 \times C_2) \rtimes C_2)$ & $8$ & $4$ & $\{1, 2, 4\}$ & $2$ \\
         & [128, 1165] & $(C_2 \times C_2 \times C_2 \times D_8) \rtimes C_2$ & $8$ & $4$ & $\{1, 2, 4\}$ & $2$ \\
         & [128, 2194] & $C_2 \times D_8 \times D_8$ & $8$ & $4$ & $\{1, 2, 4\}$ & $2$ \\
         & [128, 2213] & $((C_4 \times D_8) \rtimes C_2) \rtimes C_2$ & $8$ & $4$ & $\{1, 2, 4\}$ & $2$ \\
    \end{tabular}
  \end{center}
\end{table}

The proof is given below.

\begin{theorem}\label{ThmNilpClassTwo} If $G$ is a group of nilpotency class $2$ then
\begin{equation}\label{SubTPPNilpClassTwo}
\rho_0(G) < \sqrt{|G: Z(G)|}.
\end{equation}
\end{theorem}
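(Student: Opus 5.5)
The plan is to argue by induction on $|G|$, fixing a maximal subgroup TPP triple $(S,T,U)$ of $G$. First I dispose of the trivial case. Since $G$ is nonabelian, $G/Z(G)$ is not cyclic by Lemma \ref{LemNonAbCentQuo}, so $|G:Z(G)| \geq 4$ and $\sqrt{|G:Z(G)|} \geq 2$. Hence if $\rho_0(G)=1$ there is nothing to prove, and I may assume $|S||T||U| > |G|$. By Corollary \ref{Corr2LemNeuQuoSubTPP}, no member then contains $G'$ or $Z(G)$.

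\emph{Reduction to core-free members.} Suppose some member, say $S$, contains a non-trivial normal subgroup $N$ of $G$. By Lemma \ref{LemNeuQuoSubTPP}, $\rho_0(G) \leq \rho_0(G/N)$. The quotient $G/N$ is either abelian or of nilpotency class $2$, and it has smaller order than $G$.
\begin{itemize}
\item If $G/N$ is abelian, then $\rho_0(G/N)=1 < \sqrt{|G:Z(G)|}$.
\item Otherwise the induction hypothesis gives $\rho_0(G/N) < \sqrt{|G/N : Z(G/N)|}$. The chain of inequalities in the proof of Lemma \ref{LemQuoNilpClassTwo} then yields $\rho_0(G) \leq \rho_0(G/N) < \sqrt{|G:Z(G)|}$, with the strict inequality preserved.
\end{itemize}
So I may assume that $S,T,U$ contain no non-trivial normal subgroup of $G$. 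Every subgroup of $Z(G)$ is normal in $G$, so this forces
\[ S\cap Z(G) = T\cap Z(G) = U \cap Z(G) = \{1\}. \]
By Observation \ref{ObsNilpClassTwo}, $SZ(G)$, $TZ(G)$ and $UZ(G)$ are then abelian normal subgroups of orders $|S||Z(G)|$, $|T||Z(G)|$ and $|U||Z(G)|$ respectively.

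\emph{Key estimate.} Next I reuse the coset-counting argument from the proof of Proposition \ref{PropTPPPermSubsets}(3). That argument only uses that $S$ lies inside the subgroup $H$, together with translation of $T$ and $U$ within cosets of $H$. Taking $H = SZ(G)$, which is abelian, it gives
\[ \frac{|S||T||U|}{|G|} \leq |G:SZ(G)|\,\rho(SZ(G)) = \frac{|G:Z(G)|}{|S|}. \]
The same argument with $TZ(G)$ and $UZ(G)$ gives the analogous bounds with $|T|$ and $|U|$ in place of $|S|$. Write $r = |S||T||U|/|G| = \rho_0(G)$ and multiply the three bounds:
\[ r^3 \leq \frac{|G:Z(G)|^3}{|S||T||U|} = \frac{|G:Z(G)|^3}{r\,|G|}, \]
so that
\[ r^4 \leq \frac{|G:Z(G)|^2}{|Z(G)|}. \]
Since $|Z(G)| \geq 2$ for a group of class $2$, this gives $\rho_0(G) \leq \sqrt{|G:Z(G)|}\,|Z(G)|^{-1/4} < \sqrt{|G:Z(G)|}$.

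\emph{Main obstacle.} The step needing most care is checking that the estimate of Proposition \ref{PropTPPPermSubsets}(3) really holds with $H$ an arbitrary subgroup containing $S$, rather than the normaliser of $S$. I would re-derive it directly. Partition $T$ and $U$ by cosets of $H$, translating so that $T\cap H$ and $U\cap H$ are the largest pieces (Observation 4.1 of \citep{Neu2011}). This gives $|T||U| \leq |G:H|^2\,|T\cap H||U\cap H|$. Since $(S, T\cap H, U\cap H)$ is a TPP triple of the abelian group $H$, we also have $|S||T\cap H||U\cap H| \leq |H|$, and combining the two gives the bound. A second, minor point is that the strict inequality survives the inductive reduction; this holds because the reduction only chains $\leq$ after a strict inductive step.
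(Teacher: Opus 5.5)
Your proof is correct, and in the key case it takes a different route from the paper.

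Both arguments start the same way. If a member meets $Z(G)$ (or, in your version, contains any nontrivial normal subgroup), one passes to a quotient using Lemma~\ref{LemNeuQuoSubTPP} and the inequality chain of Lemma~\ref{LemQuoNilpClassTwo}. The divergence is in the case $S\cap Z(G)=\{1\}$.

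\textbf{The paper's route.} It forms the normal subgroup $H=SZ(G)T$.
\begin{itemize}
\item If $H<G$, it restricts to $H$ via Proposition~\ref{PropTPPSplit} and applies induction to $H$. This requires checking that $H$ has class $2$.
\item If $H=G$, it quotients by $SZ(G)\cap T$ when this is nontrivial.
\item Otherwise it gets $G=SZ(G)\rtimes T$. Permuting the members then gives $|S|=|T|=|U|=\sqrt{|G:Z(G)|}$, hence $\rho_0(G)=\sqrt{|G:Z(G)|}/|Z(G)|$.
\end{itemize}

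\textbf{Your route.} You reduce only through quotients, until all three members meet $Z(G)$ trivially. A single coset-counting estimate over the three abelian subgroups $SZ(G)$, $TZ(G)$, $UZ(G)$ then gives $\rho_0(G)^4\le|G:Z(G)|^2/|Z(G)|$.

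\textbf{Comparison.} Your approach avoids induction on subgroups and the class-$2$ check for $SZ(G)T$. It also avoids the permutation step, which the paper only sketches: that step tacitly requires rerunning the case analysis for every pair of members. In addition, it gives an explicit sharpening, $\rho_0(G)\le\sqrt{|G:Z(G)|}\,|Z(G)|^{-1/4}$, in the reduced case. The paper's route, in return, extracts structural information in the extremal situation, namely the decomposition $G=SZ(G)\rtimes T$ with $|S|=|T|=|U|$. This is exactly what is reused in the proof of Theorem~\ref{ThmRho0pGroupCycComm} to force $G$ to be extraspecial.

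\textbf{On your ``main obstacle''.} No translation is needed. Since $T$ and $U$ are subgroups, every nonempty $T\cap yH$ is a left coset of $T\cap H$. Hence $|T|=(|TH|/|H|)\,|T\cap H|\le|G:H|\,|T\cap H|$ directly, and the same holds for $U$.
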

\begin{proof}\textup{(1)} The proof will be by induction on $|G|$ where $G$ is a group of nilpotency class $2$.

\vspace{1mm}

(Base case) The smallest groups of nilpotency class $2$ are the extraspecial groups of order $8 = 2^{1 + 2}$, namely, the dihedral group $D_8$ (GAP ID [8, 3]) and the quaternion group $Q_8$ (GAP ID [8, 4]): these contain cyclic (and normal) subgroups of index $2$ and, by \citep[Corollary 4.3]{Mur}, satisfy $\rho_0(G) = 1 < \sqrt{|G:Z(G)|} = \sqrt{2^2} = 2$.  Note that by Proposition \ref{PropRho0p4} $\rho_0 = 1$ is also true for all extraspecial groups of order $p^3$ for all primes $p$.

\vspace{1mm}

(Inductive hypothesis) Suppose (\ref{SubTPPNilpClassTwo}) is true for all groups of nilpotency class $2$ of order $ < |G|$, where $G$ is of nilpotency class $2$ and $|G| > 8$.

\vspace{1mm}

Let $(S, T, U)$ be a maximal non-trivial subgroup TPP triple of $G$, that is, $\frac{|S||T||U|}{|G|} = \rho_0(G) > 1$. Choosing any member, $S$, say, let $S_0 = S \cap Z(G)$, where $S_0$ is normal in $G$ (it is a subgroup of central elements of $G$). There are two cases: \textup{(i)} $S_0 = \{1\}$ and \textup{(ii)} $S_0 > \{1\}$, where in both cases $S_0 < S, Z(G)$.

\vspace{1mm}

Consider \textup{(i)}, $S_0 = \{1\}$. By Proposition \ref{PropTPPPermSubsets} and Observation \ref{ObsNilpClassTwo} $S$ is an abelian non-normal subgroup of $G$, and the product $SZ(G)$ is an abelian normal subgroup of $G$. Let $H = SZ(G)T$, where $H$ is normal in $G$. There are two subcases: \textup{(i.a)} $H < G$ and \textup{(i.b)} $H = G$.

\vspace{1mm}

Consider \textup{(i.a)}, $H < G$. Letting $U_0 = U \cap H$ and $U_1 = U \cap G\backslash H$ and applying Proposition \ref{PropTPPSplit} $\rho_0(G) = \frac{|S||T||U|}{|G|} \leq \rho_0(H)$. By Proposition \ref{PropTPPPermSubsets} $S$ and $T$ don't centralise each other, and there is a normal series for $H$ given by $\{1\} < H' = [SZ(G)T, SZ(G)T] = [S, T] \leq G' \leq Z(G) \leq Z(H) < H$, which yields a central series $\{1\} < H' \leq Z(H) < H$ of length $2$ and shows that $H$ is of nilpotency class $2$. The inductive hypothesis yields $\rho_0(H) < \sqrt{|H:Z(H)|}$, and the conditions $|H| < |G|$ and $|Z(H)| \geq |Z(G)|$ imply $\rho_0(G) \leq \rho_0(H) < \sqrt{|H:Z(H)|} \leq \sqrt{|G:Z(G)|}$.

\vspace{1mm}

Now consider \textup{(i.b)} $H = SZ(G)T = STZ(G) = \langle S, T, Z(G) \rangle = G$. Let $N = SZ(G) \cap T$. Then $N$ is normal in $G$ as it is normal in $SZ(G)$ and $T$, which generate $G$, and also $N \subseteq T$. Supposing $N > \{1\}$, by Lemma \ref{LemNeuQuoSubTPP}, the quotient $G/N$, which is a group of order $< |G|$, realises the subgroup TPP triple $(T/N,SN/N, UN/N)$ of type $(|T|/|N|,|S|,|U|)$ and $\rho_0(G/N) \geq \rho_0(G)$, and by the inductive hypothesis and Lemma \ref{LemQuoNilpClassTwo} $\rho_0(G) \leq \rho_0(G/N) < \sqrt{|G/N|:|Z(G/N)|} \leq \sqrt{|G:Z(G)|}$. Suppose $N = \{1\}$. Then $SZ(G)T = G$ is an (internal) semidirect product $SZ(G) \rtimes T$, and $G/SZ(G) \cong T$ and $|S||T||Z(G)| = |G|$. Permuting $T$ and $U$, which corresponds to the permuted TPP triple $(S, U, T)$, it can be shown that $G/SZ(G) \cong U$, which implies that $T \cong U$ and $|T| = |U|$. By also considering the permuted triples $(T, S, U)$ and $(T, U, S)$ it can be shown that $|S| = |U|$, and by transitivity $|S| = |T| = |U|$. Finally, from $|G| = |S||T||Z(G)| = |S|^2|Z(G)|$ it follows that $|S| = |T| = |U| = \sqrt{|G:Z(G)|}$ and $\rho_0(G) = \frac{|S||T||U|}{|G|} = \frac{|S|^3}{|G|} = \frac{\sqrt{|G:Z(G)|}}{|Z(G)|} < \sqrt{|G:Z(G)|}$.

\vspace{1mm}

Consider \textup{(ii)} $S_0 = S \cap Z(G) > \{1\}$, where $S_0 < S, Z(G)$. Then $G/S_0$ is a group of order $< |G|$ and nilpotency class $2$ as there is a central series of length $2$ given by $\{1\} < (G/S_0)' = [G/S_0, G/S_0] = G'S_0/S_0 \leq Z(G)S_0/S_0 < G/S_0$. The inductive hypothesis and Lemma \ref{LemQuoNilpClassTwo} yield $\rho_0(G/S_0) < \sqrt{|G/S_0|:|Z(G/S_0)|} \leq \sqrt{|G:Z(G)|}$. Lemma \ref{LemNeuQuoSubTPP} implies that $G/S_0$ realises the subgroup TPP triple $(S/S_0, TS_0/S_0, US_0/S_0)$ of type $(|S|/|S_0|,|T|,|U|)$ and $\rho_0(G) = \frac{|S||T||U|/|S_0|}{|G|/|S_0|} \leq \rho_0(G/S_0) < \sqrt{|G:Z(G)|}$. By permutation invariance the same conclusion follows from supposing $T_0 = T \cap Z(G) > \{1\}$ or $U_0 = U \cap Z(G) > \{1\}$.

\end{proof}

The result may be compared with a fact from the character theory of finite groups that if $d$ is the degree of an (irreducible) character of a group $G$ then $d \leq \sqrt{|G:Z(G)|}$.

\section{$p$-Groups with a Cyclic Commutator Subgroup of Order $p$}\label{SecGroupsOfTypeII}

The $p$-groups with a cyclic commutator subgroup of order $p$ are, as proved in Lemma \ref{LemCycNormSubOrderp}, of nilpotency class $2$. Any member $G$ of this class of order, $p^n$ for $n \geq 3$, say, contains a centre of order at least $p$, and by Theorem \ref{ThmNilpClassTwo} there is a variable bound in $p$ and $n$ for $\rho_0(G)$ given by

\begin{align}\rho_0(G) < p^{\frac{(n - 1)}{2}}.
\end{align}

A better and sharp bound of $p$ is suggested by the very limited subgroup TPP data \citep[Tables 1 \& 2]{HM} for this class of groups for small $p$, namely, $p=2, 3$, as reproduced below in Table 2, showing the (qualifying) $2$-groups of order $8$, $16$, and $32$, and Table 3, showing the $3$-groups of order $27$.

\begin{table}[H]
  \begin{center}
    \caption{Known subgroup TPP ratio data ($\rho_0$) for small $2$-groups with a cyclic commutator subgroup of order $2$}
    \label{tbl:TblRho0GroupsOfTypeIIa}
    \begin{tabular}{llllllll}
      $\bm{|G|}$ & \multicolumn{1}{l}{\textbf{GAP ID}} & \multicolumn{1}{l}{\textbf{Structure Description}} & \multicolumn{1}{l}{$\bm{|Z(G)|}$} & \multicolumn{1}{l}{$\bm{\sqrt{|G:Z(G)|}}$} & \multicolumn{1}{l}{$\bm{\textbf{cd}(G)}$} & \multicolumn{1}{l}{$\bm{\rho_0}$} \\
      \toprule
        \multirow{2}{*}{$8$} &
            [8, 3] & $D_8$ & $2$ & $2$ & $\{1, 2\}$ & $1$ \\
          & [8, 4] & $Q_8$ & $2$ & $2$ & $\{1, 2\}$ & $1$ \\
        \hline
        \multirow{6}{*}{$16$} & 
            [16, 3] & $(C_4 \times C_2) \rtimes C_2$ & $4$ & $2$ & $\{1, 2\}$ & $1$ \\
          & [16, 4] & $C_4 \times C_4$ & $4$ & $2$ & $\{1, 2\}$ & $1$ \\
          & [16, 6] & $C_8 \rtimes C_2$ & $4$ & $2$ & $\{1, 2\}$ & $1$ \\
          & [16, 11] & $C_2 \times D_8$ & $4$ & $2$ & $\{1, 2\}$ & $1$ \\
          & [16, 12] & $C_2 \times Q_8$ & $4$ & $2$ & $\{1, 2\}$ & $1$ \\
          & [16, 13] & $(C_4 \times C_2) \rtimes C_2$ & $4$ & $2$ & $\{1, 2\}$ & $1$ \\
        \hline
        \multirow{5}{*}{$32$} &
            [32, 2] & $(C_4 \times C_2) \rtimes C_4$ & $8$ & $2$ & $\{1, 2\}$ & $1$ \\
          & [32, 4] & $C_8 \rtimes C_4$ & $8$ & $2$ & $\{1, 2\}$ & $1$ \\
          & [32, 5] & $(C_8 \times C_2) \rtimes C_2$ & $8$ & $2$ & $\{1, 2\}$ & $1$ \\
          & [32, 12] & $C_4 \times C_8$ & $8$ & $2$ & $\{1, 2\}$ & $1$ \\
          & [32, 17] & $C_{16} \rtimes C_2$ & $8$ & $2$ & $\{1, 2\}$ & $1$ \\
          & [32, 22] & $C_2 \times ((C_4 \times C_2) \rtimes C_2)$ & $8$ & $2$ & $\{1, 2\}$ & $1$ \\
          & [32, 23] & $C_2 \times (C_4 \rtimes C_4)$ & $8$ & $2$ & $\{1, 2\}$ & $1$ \\
          & [32, 24] & $(C_4 \times C_4) \rtimes C_2$ & $8$ & $2$ & $\{1, 2\}$ & $1$ \\
          & [32, 25] & $C_4 \times D_8$ & $8$ & $2$ & $\{1, 2\}$ & $1$ \\
          & [32, 26] & $C_4 \times Q_8$ & $8$ & $2$ & $\{1, 2\}$ & $1$ \\
          & [32, 37] & $C_2 \times (C_8 \rtimes C_2)$ & $8$ & $2$ & $\{1, 2\}$ & $1$ \\
          & [32, 38] & $(C_8 \times C_2) \rtimes C_2$ & $8$ & $2$ & $\{1, 2\}$ & $1$ \\
          & [32, 46] & $C_2 \times C_2 \times D_8$ & $8$ & $2$ & $\{1, 2\}$ & $1$ \\
          & [32, 47] & $C_2 \times C_2 \times Q_8$ & $8$ & $2$ & $\{1, 2\}$ & $1$ \\
          & [32, 48] & $C_2 \times ((C_4 \times C_2) \times C_2)$ & $8$ & $2$ & $\{1, 2\}$ & $1$ \\
          & [32, 49] & $(C_2 \times C_2 \times C_2) \rtimes (C_2 \times C_2)$ & $2$ & $4$ & $\{1, 4\}$ & $2$ \\
          & [32, 50] & $(C_2 \times Q_8) \rtimes C_2$ & $2$ & $4$ & $\{1, 2\}$ & $1$ \\
    \end{tabular}
  \end{center}
\end{table}

\begin{table}[H]
  \begin{center}
    \caption{Known subgroup TPP ratio data ($\rho_0$) for small $3$-groups with a cyclic commutator subgroup of order $3$}
    \label{tbl:TblRho0GroupsOfTypeIIb}
    \begin{tabular}{llllllll}
      $\bm{|G|}$ & \multicolumn{1}{l}{\textbf{GAP ID}} & \multicolumn{1}{l}{\textbf{Structure Description}} & \multicolumn{1}{l}{$\bm{|Z(G)|}$} & \multicolumn{1}{l}{$\bm{\sqrt{|G:Z(G)|}}$} & \multicolumn{1}{l}{$\bm{\textbf{cd}(G)}$} & \multicolumn{1}{l}{$\bm{\rho_0}$} \\
      \toprule
        \multirow{2}{*}{$27$} &
            [27, 3] & $(C_3 \times C_3) \rtimes C_3$ & $3$ & $3$ & $\{1, 3\}$ & $1$ \\
          & [27, 4] & $C_9 \rtimes C_3$ & $3$ & $3$ &  $\{1, 3\}$ &$1$ \\
    \end{tabular}
  \end{center}
\end{table}

(There is some overlap in these tables with the $p$-groups of nilpotency class $2$ satisfying the large centre assumption $p^2 \leq |G:Z(G)| \leq p^3$ or satisfying the small character degrees assumption $\text{cd}(G) = \{1, p\}$.) The data show that $\rho_0 = 1$, except for a single $2$-group of order $32$, which is an extraspecial group (GAP ID [32, 49]) that does reach the upper bound of $2$. This suggests a bound of $\rho_0 \leq p$ for this class of groups, which is proved below.

\begin{theorem}\label{ThmRho0pGroupCycComm} If $G$ is a nonabelian $p$-group with a cyclic commutator subgroup $G'$ of order $p$ then
\begin{equation}\label{EqnRho0pGroupCycComm} \rho_0(G) \leq p.
\end{equation}
\end{theorem}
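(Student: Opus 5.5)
The plan is to argue by induction on $|G|$, following the case structure of the proof of Theorem \ref{ThmNilpClassTwo}, but exploiting the fact that $G' \cong \mathbb{Z}_p$. By Lemma \ref{LemCycNormSubOrderp}, $G$ has nilpotency class $2$, and the base cases are covered by Proposition \ref{PropRho0p4}. Let $(S,T,U)$ be a maximal subgroup TPP triple with $\rho_0(G) > 1$.

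\emph{Reduction to central-free members.} Suppose some member, say $S$, meets the centre nontrivially, so $S_0 = S \cap Z(G) > \{1\}$. By Corollary \ref{Corr2LemNeuQuoSubTPP}, $S_0$ cannot contain $G'$. Since $|G'| = p$, this forces $S_0 \cap G' = \{1\}$. Then $G/S_0$ has commutator subgroup $G'S_0/S_0 \cong G'$, which is cyclic of order $p$. Hence $G/S_0$ is a smaller nonabelian $p$-group in the same class. Lemma \ref{LemNeuQuoSubTPP} and the inductive hypothesis then give $\rho_0(G) \leq \rho_0(G/S_0) \leq p$. By permutation invariance we may therefore assume $S, T, U$ all meet $Z(G)$ trivially. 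By Observation \ref{ObsNilpClassTwo}, $SZ(G)$, $TZ(G)$ and $UZ(G)$ are then abelian normal subgroups.

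\emph{Reduction to a "symplectic" configuration.} Next I repeat subcases (i.a) and (i.b) of Theorem \ref{ThmNilpClassTwo}. Consider $H = SZ(G)T$. If $H < G$, it is nonabelian by Proposition \ref{PropTPPPermSubsets}, and $H' \leq G'$ has order $p$. So Proposition \ref{PropTPPSplit} and induction give $\rho_0(G) \leq \rho_0(H) \leq p$. Now suppose $H = G$ and $N = SZ(G) \cap T > \{1\}$. Then $N$ is normal and contained in $T$. Moreover $N \cap G' = \{1\}$, since $N \leq SZ(G)$ is abelian and $T \cap Z(G) = \{1\}$. So passing to $G/N$ again stays inside the class, and induction applies.

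This leaves the case $G = SZ(G) \rtimes T$, and symmetrically for all pairs. As in Theorem \ref{ThmNilpClassTwo}, this forces $|S| = |T| = |U| = p^k$ with $|G:Z(G)| = p^{2k}$, so that
\[
\rho_0(G) = \frac{p^{k}}{|Z(G)|}.
\]
In this case, by the identity (\ref{EqCommId}) the commutator gives a bilinear pairing $S \times T \to G' \cong \mathbb{Z}_p$. Its kernel $C_T(S)$ centralises $S$, $T$ and $Z(G)$, hence is central, hence trivial. So the pairing is nondegenerate, and $S$, $T$ (and likewise $U$) are elementary abelian of rank $k$. Their images in $V = G/Z(G)$ are pairwise transversal subspaces of dimension $k$, each isotropic for the commutator form. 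If $|Z(G)| \geq p^{k-1}$ we are done, so the remaining work is to show that $k \geq 2$ with small centre contradicts the TPP.

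\emph{Main obstacle.} The hard step is to exhibit a nontrivial element of $S \cap TU$ when $k \geq 2$. Write $U$ modulo $Z(G)$ as the graph $\{\bar{s}_x\bar{\varphi}(x)\}$ of an isomorphism $\varphi \colon \bar{S} \to \bar{T}$. For each $x \in \bar{S}$, choose the unique $t_x \in T$ and $u_x \in U$ whose images in $V$ are $-\varphi(x)$ and $(x, \varphi(x))$ respectively. Then $t_x u_x = s_x z(x)$ with $s_x \in S$ and $z(x) \in Z(G)$.

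Using (\ref{EqCommId}) and (\ref{EqCommId2}), I expect $z(x)$ to lie in $G'$ modulo a homomorphic (linear) part absorbed by the choice of lifts, and to be given by a quadratic form $q(x) = [s_x, \varphi(x)]$ in $k$ variables over $\mathbb{F}_p$. A nontrivial zero of $q$ yields a nontrivial element of $S \cap TU$, contradicting the TPP. For $k \geq 3$ this follows from Chevalley--Warning. For $k = 2$ (the $p^{1+4}$-type case, where the data show $\rho_0 = p$ is attained), one cannot expect a zero in general. There instead the bound $\rho_0 = p^2/|Z(G)| \leq p$ suffices directly.

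Making the cocycle $z(x)$ genuinely quadratic is where I expect the difficulty to lie. This requires handling $p = 2$, where squares need not be central of the expected form, and checking that the linear part can really be removed by re-choosing lifts. I would try first to normalise the lifts using Lemma \ref{LemExspSub}-style arguments in the extraspecial quotient $G/\ker$.
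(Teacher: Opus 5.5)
\textbf{The gap.} The genuine gap is the step you flag yourself, and the remedy you propose cannot work, because there is no freedom in the lifts. Since $S,T,U$ meet $Z(G)$ trivially, each element of $\bar S,\bar T,\bar U$ has exactly one preimage in $S,T,U$, so $z$ is determined by the triple. Class-$2$ commutator calculus gives $z(x+y)=z(x)z(y)\beta(x,y)$ with $\beta\colon\bar S\times\bar S\to G'$ bilinear. Hence $x\mapsto z(x)G'$ is a homomorphism $\bar S\to Z(G)/G'$. When $Z(G)>G'$ this homomorphism is in general non-zero, and then a zero of a $G'$-valued quadratic in $k$ variables is not a zero of $z$.

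The missing idea is to restrict rather than absorb. Let $W$ be the kernel of this homomorphism. If $|Z(G)|=p^r$, the image is elementary abelian inside a group of order $p^{r-1}$, so $\dim W\ge k-r+1$. This is at least $3$ exactly in your remaining case $r\le k-2$. Identify $G'$ with $\mathbb{F}_p$. Then $z|_W$ has symmetric bilinear polarization and $z(0)=0$, so it is a polynomial of degree at most $2$: for odd $p$, $z(x)=\tfrac12\beta(x,x)+\ell(x)$ with $\ell$ linear, and for $p=2$, $z(\sum x_ie_i)=\sum x_iz(e_i)+\sum_{i<j}x_ix_j\beta(e_i,e_j)$. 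So $p=2$ is no obstacle. Chevalley--Warning needs no homogeneity, so the linear term may stay. It yields $0\ne x\in W$ with $z(x)=1$, i.e.\ $1\ne s_x\in S\cap TU$, contradicting the TPP.

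Alternatively, add the paper's step $K=SG'T$. If $K<G$, use Proposition~\ref{PropTPPSplit} and induction. If $K=G$, then $|Z(G)|=|G'|=p$, so $z$ is $G'$-valued on all of $\bar S$ and your argument runs as written. With either patch your proof is complete.

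\textbf{Comparison with the paper.} Up to the terminal configuration your reductions coincide with the paper's: the quotient by $S\cap Z(G)$, the subgroup $SZ(G)T$, and the intersection $SZ(G)\cap T$. You take more care than the paper in checking that each quotient stays in the class. At the end, however, the paper rules the configuration out via Lemma~\ref{LemExspSub}, and that step is unsound, so do not fall back on it as your last sentence suggests.

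The lemma is false: two non-commuting reflections generate $D_8$. Moreover, in the paper's terminal configuration $Z(G)=G'=[S,T]\le\langle S,T\rangle$, so $\langle S,T\rangle=SZ(G)T=G$ always. The configuration also genuinely occurs for $k=2$. In $2^{1+4}_+$ (GAP [32,49], listed with $\rho_0=2$ in Table~\ref{tbl:TblRho0GroupsOfTypeIIa}), every non-trivial subgroup TPP triple lands in it. Its members meet $Z(G)=G'$ trivially by Corollary~\ref{Corr2LemNeuQuoSubTPP}, so they embed as isotropic subspaces of $G/Z(G)\cong\mathbb{F}_2^4$ and have order at most $4$. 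Propositions~\ref{PropTPPSplit} and~\ref{PropRho0p4} then exclude $XZ(G)Y<G$.

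So your dichotomy is the argument that actually proves the theorem: accept $|Z(G)|\ge p^{k-1}$, where the bound $p^k/|Z(G)|\le p$ holds directly, and refute $|Z(G)|\le p^{k-2}$ by Chevalley--Warning.
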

\begin{proof} The proof will be by induction on $|G| = p^n$, $n \geq 3$, where $G$ is a nonabelian $p$-group with a cyclic commutator subgroup $G'$ of order $p$, and $p$ is an arbitrary fixed prime. Recall that, as shown in Lemma \ref{LemCycNormSubOrderp}, they satisfy \ref{NilpClassTwo}. The base case is extraspecial groups of order $8$, which have already been treated in Theorem \ref{ThmNilpClassTwo} and shown to have $\rho_0 = 1$. Note that by Proposition \ref{PropRho0p4} $\rho_0 = 1$ is also true for all extraspecial groups of order $p^3$ for all primes $p$.

(Inductive hypothesis) Suppose \ref{EqnRho0pGroupCycComm} is true for all $p$-groups of order $< p^n$, for $n \geq 5$, with cyclic commutator subgroups of order $p$.

\vspace{1mm}

The approach taken here will be almost identical to that used in Theorem \ref{ThmNilpClassTwo}. Let $(S, T, U)$ be a maximal non-trivial subgroup TPP triple of $G$ such that $\frac{|S||T||U|}{|G|} = \rho_0(G) > 1$. Let $S_0 = S \cap Z(G)$. Then $S_0$ is normal in $G$ as it is central in $G$. There are two cases to consider: \textup{(i)} $S_0 = \{1\}$ and \textup{(ii)} $S_0 > \{1\}$, where in both cases $S_0 < S, Z(G)$. The easiest case is \textup{(ii)} $S_0 > \{1\}$. Then $|S_0| \geq p$ and there is a quotient $G/S_0$ of order $< p^n$ which, by Lemma \ref{LemNeuQuoSubTPP} and the inductive hypothesis, s{}atisfies $\rho_0(G) \leq \rho_0(G/S_0) \leq p$.

\vspace{1mm}

Consider case \textup{(i)}, $S_0 = \{1\}$. As in Theorem \ref{ThmNilpClassTwo} $T \cap Z(G) = \{1\}$ and $U \cap Z(G) = \{1\}$ may also be supposed. Let $H = SZ(G)T = STZ(G)$. Then, by Observation \ref{ObsNilpClassTwo}, $S$ (and also $T$ and $U$) is abelian and $H$ is normal in $G$ . There are two subcases \textup{(i.a)} $H < G$ and \textup{(i.b)} $H = G$. Consider \textup{(i.a)}, $H < G$. Then $H$ is of order $< p^n$, and of nilpotency class $2$ as there is a central series of length $2$ given by $\{1\} < H' = [H, H] \leq Z(H) < H$, where $H' = [SZ(G)T, SZ(G)T] = [S, T]$, and by the inductive hypothesis and Proposition \ref{PropTPPSplit} $\rho_0(G) \leq \rho_0(H) \leq p$.

\vspace{1mm}

Now consider \textup{(i.b)} $H = SZ(G)T = \langle S, T, Z(G) \rangle = G$. It will be shown, by constructing in parallel another normal subgroup $K = SG'T = STG'$ of $G$, and comparing it with $H$ while using the assumption that $G'$ is cyclic of order $p$, that $G$ must be extraspecial. First, the possibility that $K < G$ can be dealt with using the same argument above for the subcase \textup{(i.a)}, with $K$ replacing $H$ and $G'$ replacing $Z(G)$, which implies that $\rho_0(G) \leq \rho_0(K) \leq p$. So consider $K = SG'T = G = SZ(G)T$, where $SG'$ is normal in $G$. As shown in Theorem \ref{ThmNilpClassTwo} any non-trivial intersection $N = SZ(G) \cap T > \{1\}$ can be used, together with the inductive hypothesis, to construct a quotient $G/N$ such that $\rho_0(G) \leq \rho_0(G/N) \leq p$. Let $SZ(G) \cap T = \{1\}$ Then $G$ is a semidirect product $G = SZ(G) \rtimes T$. Now, as $S \cap G' = \{1\}$ and $SG' \cap T = \{1\}$, the condition $K = SG'T = SZ(G)T$ implies that $G$ is also the semidirect product $G = SG' \rtimes T$, which implies that $TG' \cong TZ(G)$ and $G' \cong Z(G) \cong \mathbb{Z}_p$. The Frattini subgroup, which is the intersection of all the maximal subgroups of $G$ and consists of all the non-generating elements of $G$, is the product $\Phi(G) = G^pG'$, where $G^p$ is the subgroup generated by all the $p$-th powers of elements of $G$, and $G/\Phi(G)$ is elementary abelian. In the case being considered $G' \cong \mathbb{Z}_p$, so all commutators $[x, y] \in G'$ are of order $p$, and \ref{EqCommId2} yields $[g^p, x] = [g, p]^p = 1$ for any $g^p \in G^p$ and any $x \in G$. That is, $G^p$ centralises $G$ and so $G^p \leq Z(G) = G'$, implying that $G' = Z(G) = \Phi(G)$ and that $G$ is extraspecial of order $p^{1 + 2m}$ for some $m \geq 2$ such that $n = 2m + 1$.

\vspace{1mm}

As shown in Theorem \ref{ThmNilpClassTwo}, by permuting $S, T, U$, there are also semidirect products $G = SZ(G) \rtimes U = TZ(G) \rtimes S =  TZ(G) \rtimes U$, which imply that $S \cong T \cong U$ and $|S| = |T| = |U| = \sqrt{|G:Z(G)|} = p^m$. Also $S, T, U$ are all abelian (from $S \cap G' = T \cap G' = U \cap G' = \{1\}$). Writing $G$ as $G = S\Phi(G)T = \langle S, T, \Phi(G) \rangle$ the Frattini subgroup $\Phi(G)$ can be removed, which implies that $\langle S, T \rangle = G$. But this is directly contradicted by Lemma \ref{LemExspSub}.
\end{proof}

This leads to an immediate corollary for extraspecial groups, which have the property that $G'$ is cyclic of order $p$ such that $G' = Z(G) = \Phi(G)$.

\begin{corollary}\label{CorrRho0pGroupCycComm} If $G$ is an extraspecial group of order $p^{1 + 2n}$, for $n \geq 1$, then $\rho_0(G) \leq p$.
\end{corollary}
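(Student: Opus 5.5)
This is a direct specialisation of Theorem \ref{ThmRho0pGroupCycComm}, so the plan is only to check that an extraspecial group satisfies its hypotheses. Let $G$ be extraspecial of order $p^{1+2n}$ with $n \geq 1$. By definition $G$ is a $p$-group with $Z(G) = G' = \Phi(G)$ cyclic of order $p$.

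The only substantive point is that $G$ is nonabelian. If $G$ were abelian, then $G' = \{1\}$, contradicting $|G'| = p$. Equivalently, $Z(G) = G$ would then have order $p^{1+2n} > p$, contradicting $|Z(G)| = p$. So $G$ is a nonabelian $p$-group whose commutator subgroup is cyclic of order $p$, and Theorem \ref{ThmRho0pGroupCycComm} gives $\rho_0(G) \leq p$ at once.

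For the smallest case $n = 1$, there is also a sharper independent check. Here $|G| = p^3$, and Proposition \ref{PropRho0p4} gives $\rho_0(G) = 1 \leq p$. This agrees with the base case used in the proof of Theorem \ref{ThmRho0pGroupCycComm}.

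There is no real obstacle. The only care needed is to confirm that the definition of extraspecial matches the hypotheses of Theorem \ref{ThmRho0pGroupCycComm} exactly, namely nonabelian with $|G'| = p$. The further conditions $G' = Z(G) = \Phi(G)$ are not needed for this bound.
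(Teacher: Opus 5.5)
Your proposal is correct and is exactly the paper's route: the corollary is stated as an immediate consequence of Theorem~\ref{ThmRho0pGroupCycComm}, because an extraspecial group is a nonabelian $p$-group with $G'$ cyclic of order $p$.

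One caveat concerns the result you are relying on. The paper's proof of that theorem ends, in its final case, with Lemma~\ref{LemExspSub}; in that case $G$ is extraspecial of order $p^{2m+1}$ and $|S|=|T|=|U|=p^m$, so the ratio is $p^{m-1}$. That lemma is false: $D_8$ is generated by two subgroups of order $2$, and $D_8*D_8$ by two elementary abelian subgroups of order $4$. So for $m\ge 3$ this case needs another argument. For example, write $s=tuz$ uniquely with $t\in T$, $u\in U$, $z\in Z(G)$. Then $s\mapsto z$ is a polynomial of degree at most $2$ on $S\cong(\mathbb{Z}_p)^m$, and by the TPP its only zero is $s=1$, which the Chevalley--Warning theorem rules out when $m\ge 3$.
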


\section{$p$-Groups Of Nilpotency Class $2$ with a Large Centre}\label{SecGroupsOfTypeIII}

The groups considered here are $p$-group of nilpotency class $2$ that satisfy $p^2 \leq |G:Z(G)| \leq p^3$. The available subgroup TPP ratio data for these groups can be found as a subset of Table \ref{tbl:TblRho0GroupsOfTypeI}, and Tables \ref{tbl:TblRho0GroupsOfTypeIIa} and \ref{tbl:TblRho0GroupsOfTypeIIb}, where $|G:Z(G)| = p^3$. Although Theorem \ref{ThmNilpClassTwo} implies that $\rho_0 \leq p$ here, a stronger result of $\rho_0 = 1$ follows from Proposition \ref{PropAbCentQuop2p3}.

\begin{theorem}\label{ThmRho0pGroupLargeCentre} If $G$ is a $p$-group of nilpotency class $2$ such that $p^2 \leq |G:Z(G)| \leq p^3$ then $\rho_0(G) = 1$.
\end{theorem}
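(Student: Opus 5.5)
The plan is to reduce everything to the existence of an abelian normal subgroup of index $p$, and then invoke \citep[Corollary 4.3]{Mur}, which gives $\rho_0 = 1$ for $p$-groups possessing such a subgroup. This is exactly the route indicated by Proposition \ref{PropAbCentQuop2p3}.

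First I will pin down the possible structures of $G/Z(G)$. Since $G$ has nilpotency class $2$, the quotient $G/Z(G)$ is abelian. By Lemma \ref{LemNonAbCentQuo} it is not cyclic. Given $p^2 \leq |G:Z(G)| \leq p^3$, there are only two cases.
\begin{itemize}
\item If $|G:Z(G)| = p^2$, then $G/Z(G) \cong C_p \times C_p$, since $C_{p^2}$ is cyclic and so excluded.
\item If $|G:Z(G)| = p^3$, then $G/Z(G)$ is abelian of order $p^3$ and non-cyclic, so it is $C_{p^2} \times C_p$ or $C_p \times C_p \times C_p$.
\end{itemize}
In either case the hypotheses of Proposition \ref{PropAbCentQuop2p3} hold, so $G$ contains a maximal normal subgroup $K$ of index $p$. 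Applying \citep[Corollary 4.3]{Mur} to $K$ then yields $\rho_0(G) \leq 1$. Combined with the trivial bound $\rho_0(G) \geq 1$ from the triple $(G,\{1\},\{1\})$, this gives $\rho_0(G) = 1$.

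The main obstacle is checking that $K$ is genuinely abelian. Proposition \ref{PropAbCentQuop2p3} asserts this, but its proof is terse, so I would verify it directly.
\begin{itemize}
\item In the order $p^2$ case, take $K = \langle x \rangle Z(G)$, where $x Z(G)$ generates one cyclic factor of $G/Z(G)$. This subgroup is generated by a single element together with central elements, so it is abelian. It has index $p$, and it is normal because it contains $G' \leq Z(G)$.
\item In the case $G/Z(G) \cong C_{p^2} \times C_p$, the same construction works, taking $x Z(G)$ to be a generator of the $C_{p^2}$ factor.
\item In the elementary abelian case $G/Z(G) \cong C_p^3$, one needs two non-central elements $x, y$ with $[x,y] = 1$ whose images are independent in $G/Z(G)$. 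Then $\langle x, y \rangle Z(G)$ is abelian of index $p$.
\end{itemize}

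The elementary abelian case is where I expect the real difficulty. I would first try the commutator map on $G/Z(G)$, which is bilinear by (\ref{EqCommId}): fix a non-central $x$, and look for $y$ outside $\langle x\rangle Z(G)$ in its centraliser. This works when $C_G(x)/Z(G)$ has order at least $p^2$, but it can fail when every non-central element has centraliser $\langle x \rangle Z(G)$. That situation can occur, for example when $G'$ has rank $3$.

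If it does occur, I would fall back on the TPP machinery instead. Take a maximal non-trivial subgroup triple $(S,T,U)$. By Corollary \ref{Corr2LemNeuQuoSubTPP}\textup{(3)} and the quotient argument of Theorem \ref{ThmNilpClassTwo}, one may assume $S$, $T$, $U$ meet $Z(G)$ trivially. Each then has order at most $p^{3}$, as it embeds in $G/Z(G)$. I would then try to show that the pairwise conditions $S \cap TU = \{1\}$ cannot hold with $|S||T||U| > |G|$ inside the case (i.b) structure $G = SZ(G)T$. That structure forces $|S||T| = p^3$, and I would aim to contradict it via the bilinear commutator form.
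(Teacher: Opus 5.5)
Your suspicion about Proposition~\ref{PropAbCentQuop2p3} is justified, and it matters. The paper's proof of this theorem is exactly your claim that the proposition applies in every case, followed by \citep[Corollary 4.3]{Mur}. Suppose $G/Z(G)\cong C_p\times C_p\times C_p$. Then $G'$ is elementary abelian, since $[x,y]^p=[x^p,y]=1$, and the commutator map induces a linear surjection $\Lambda^2(G/Z(G))\to G'$. The case $|G'|=p$ is impossible, because an alternating form on a $3$-dimensional space is degenerate. If $|G'|=p^2$, the kernel is spanned by a decomposable $\bar x\wedge\bar y$, and $\langle x,y\rangle Z(G)$ is abelian of index $p$. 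If $|G'|=p^3$, however, the map is an isomorphism. Then $[x,y]=1$ forces $xZ(G),yZ(G)$ to be dependent, and no abelian subgroup of index $p$ exists; for odd $p$, the free class-$2$ group of exponent $p$ on three generators, of order $p^6$, is an example. So the paper's argument has a hole precisely where you located one, and your second step must be restricted to the remaining cases.

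The gap in your own proposal is that the fallback for $|G'|=p^3$ is never completed. The bound $|S|\le p^3$ is far too weak, and contradicting $|S||T|=p^3$ ``via the bilinear form'' is left as an intention. The missing step is short.

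First, set up the induction on $|G|$ over $p$-groups of class at most $2$ with $|G:Z(G)|\le p^3$. This class is closed under quotients and under subgroups containing $Z(G)$, which is what the quotient step of Theorem~\ref{ThmNilpClassTwo} and Proposition~\ref{PropTPPSplit} need. Corollary~\ref{Corr2LemNeuQuoSubTPP}\textup{(3)} alone only excludes $Z(G)\subseteq S$. With the induction in place, you may take $S$, $T$, $U$ to meet $Z(G)$ trivially.

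Then $S$ is abelian, since $S'\le S\cap G'$, and of exponent $p$, since $s^p\in S\cap Z(G)$. So $SZ(G)/Z(G)$ is totally isotropic for the commutator form, and hence $|S|\le p$ when $|G'|=p^3$. The same holds for $T$ and $U$, so $|S||T||U|\le p^3<p^6\le|G|$.

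Alternatively, run the reductions \textup{(i.a)}/\textup{(i.b)} on all three pairs. For each pair $X,Y$, either $XZ(G)Y<G$, or $XZ(G)\cap Y\neq\{1\}$ (both handled by induction), or $|X||Y|=|G:Z(G)|$. This forces $|S|=|T|=|U|$ with $|S|^2=p^3$, which is impossible. This parity argument in fact proves the whole theorem without \citep[Corollary 4.3]{Mur}: when $|G:Z(G)|=p^2$ it gives $|S||T||U|=p^3\le|G|$.
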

\begin{proof} Let $G$ be as given. By Theorem \ref{NilpClassTwo} $\rho_0(G) \leq p$, which means that the size $|S||T||U|$ of any subgroup TPP triple of $G$ satisfies $|S||T||U| \leq p|G| = p^{n + 1}$. However, by Proposition \ref{PropAbCentQuop2p3} $G$ contains abelian (maximal, normal) subgroups of index $p$, and by Corollary \citep[Corollary 4.3]{Mur} $\rho_0(G) = 1$.
\end{proof}

\section{$p$-Groups of Nilpotency Class $2$ with Small Character Degrees}\label{SecGroupsOfTypeIV}

As proved in \citep[Theorem 4.1, Corollary 4.3]{Mur} and used in Theorem \ref{ThmRho0pGroupLargeCentre} groups with large abelian normal subgroups of prime index $p$ tend to have a correspondingly small subgroup TPP ratio $\rho_0$ not exceeding $p$. There is an interesting fact from the character theory of finite groups, that the degree of an irreducible character of a group with an abelian (though, not necessarily normal) subgroup of index $n$, where $n$ is not necessarily prime, cannot exceed $n$. The "smallness" of character degrees of a group also seems to play a role in bounding the value of $\rho_0$, at least indirectly by reflecting the existence of large abelian subgroups. The available subgroup TPP ratio data for $p$-groups of nilpotency class $2$ with $\text{cd}(G) = \{1, p\}$, which can be found as a subset of Table \ref{tbl:TblRho0GroupsOfTypeI}, and Tables \ref{tbl:TblRho0GroupsOfTypeIIa} and \ref{tbl:TblRho0GroupsOfTypeIIb}, certainly suggest that such groups satisfy $\rho_0 = 1$, as there is no counterexample, and this is proved in the following result.

\begin{theorem} If $G$ is a $p$-group of nilpotency class $2$ with character degree set $\text{cd}(G) = \{1, p\}$ then $\rho_0(G) = 1$.
\end{theorem}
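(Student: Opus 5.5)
The plan is to reduce the statement to the existence of an abelian normal subgroup of index $p$, and then finish with \citep[Corollary 4.3]{Mur}, exactly as in the proof of Theorem \ref{ThmRho0pGroupLargeCentre}. The lower bound $\rho_0(G) \geq 1$ is automatic, since $(G, \{1\}, \{1\})$ is always a subgroup TPP triple. So only $\rho_0(G) \leq 1$ needs proof.

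The key input is the classical theorem of Isaacs and Passman on groups whose character degrees are $\{1, p\}$. It states that $\text{cd}(G) = \{1, p\}$ holds if and only if either \textup{(a)} $G$ has an abelian normal subgroup of index $p$, or \textup{(b)} $|G:Z(G)| = p^3$. This result is not proved in the paper and will be cited. I would apply it to the given $p$-group $G$ of nilpotency class $2$, which is nonabelian, and split into these two cases.

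In case \textup{(a)}, \citep[Corollary 4.3]{Mur} (a $p$-group with an abelian normal subgroup of index $p$ has $\rho_0 = 1$) gives $\rho_0(G) = 1$ directly. In case \textup{(b)}, $G$ is a $p$-group of nilpotency class $2$ with $|G:Z(G)| = p^3$, so it lies in the range $p^2 \leq |G:Z(G)| \leq p^3$. Theorem \ref{ThmRho0pGroupLargeCentre} then gives $\rho_0(G) = 1$. Alternatively, Proposition \ref{PropAbCentQuop2p3} supplies an abelian normal subgroup of index $p$, which reduces case \textup{(b)} to case \textup{(a)}.

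The main obstacle is the reliance on the Isaacs--Passman classification, which is a genuinely external result. If a self-contained argument were wanted, I would try the following route. Take a nonlinear $\chi \in \text{Irr}(G)$ of degree $p$. In a class $2$ group, $\chi$ vanishes off $Z(\chi)$ and satisfies $\chi(1)^2 = |G:Z(\chi)|$, so $|G:Z(\chi)| = p^2$. The aim would then be to build a subgroup $A$ containing $Z(G)$ with $A/Z(G)$ "Lagrangian" for the commutator pairing, and to show from the degree bound that $|G:A| = p$ unless $|G:Z(G)| = p^3$. I expect this commutator-pairing analysis, with several characters whose kernels and centres differ, to be the delicate part. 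For that reason, my first choice is to invoke the cited theorem rather than reprove it.
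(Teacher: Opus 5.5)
Your main line is the paper's proof: the Isaacs--Passman dichotomy, with case \textup{(a)} closed by \citep[Corollary 4.3]{Mur} and case \textup{(b)} closed by Theorem~\ref{ThmRho0pGroupLargeCentre}. The problem is your ``alternatively'', and it sits exactly where case \textup{(b)} has content. Proposition~\ref{PropAbCentQuop2p3} is false when $G/Z(G)\cong C_p\times C_p\times C_p$. Its proof assumes without justification that the preimage of a subgroup $L\cong C_p\times C_p$ of $G/Z(G)$ is abelian, i.e.\ that two lifts commute. Suppose in addition $|G'|=p^3$; for odd $p$, the free class-$2$ group of exponent $p$ on three generators, of order $p^6$, is an example. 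Then the commutator map induces an isomorphism $\Lambda^2(G/Z(G))\to G'$. So $[x,y]=1$ forces $x,y$ to be dependent modulo $Z(G)$, and every abelian subgroup has index at least $p^2$. Yet $\text{cd}(G)=\{1,p\}$, since $\chi(1)^2\le|G:Z(G)|=p^3$. These are precisely the groups that keep case \textup{(b)} of Isaacs--Passman from collapsing into case \textup{(a)}. The paper proves Theorem~\ref{ThmRho0pGroupLargeCentre} only via Proposition~\ref{PropAbCentQuop2p3}, so citing that theorem does not actually settle case \textup{(b)} either. The gap is inherited from the paper's own proof; your alternative just makes it visible.

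Case \textup{(b)} can be closed using Lemma~\ref{LemNeuQuoSubTPP} and Proposition~\ref{PropTPPSplit}, both of which are sound.

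Let $\mathcal{C}$ be the class of $p$-groups of class at most $2$ with $|G:Z(G)|\le p^3$. It is closed under quotients because $Z(G)N/N\le Z(G/N)$. Induct on $|G|$. Let $(S,T,U)$ be a subgroup TPP triple of a nonabelian $G\in\mathcal{C}$ with $|S||T||U|>|G|$, hence $|S||T||U|\ge p|G|$.

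If some member $X$ meets $Z(G)$ nontrivially, permute it into first position and apply Lemma~\ref{LemNeuQuoSubTPP} with $N=X\cap Z(G)$. This yields a smaller group in $\mathcal{C}$ with $\rho_0>1$, contrary to induction.

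Otherwise every member $X$ is abelian, since $X'\le X\cap G'\le X\cap Z(G)=1$. So $XZ(G)$ is a proper abelian subgroup of order $|X||Z(G)|$, and $|X|\le|G:Z(G)|/p\le p^2$.
If $|G:Z(G)|=p^2$, this gives $|S||T||U|\le p^3<p|G|$, a contradiction.
If $|G:Z(G)|=p^3$, then $|S||T||U|\ge p|G|\ge p^5$ forces two members $X,Y$ of order $p^2$. Then $XZ(G)$ and $YZ(G)$ are abelian of index $p$. Were they distinct, their intersection would be centralised by $\langle XZ(G),YZ(G)\rangle=G$ while having index $p^2$, contradicting $|G:Z(G)|=p^3$. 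Hence $X$ and $Y$ lie in one abelian subgroup, and Proposition~\ref{PropTPPSplit} gives $|S||T||U|\le|G|$, a contradiction.

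This argument in fact proves Theorem~\ref{ThmRho0pGroupLargeCentre} in its full range.
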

\begin{proof}By \citep[Theorem C 4.8]{IP} any (nonabelian) $p$-group with $\text{cd}(G) = \{1, p\}$ either contains an abelian (maximal, normal) subgroup of index $p$, or has a centre of index $p^3$. In the first case $\rho_0 = 1$ by \citep[Corollary 4.3]{Mur}. In the second case $G/Z(G)$ is abelian of order $p^3$ and by Theorem \ref{ThmRho0pGroupLargeCentre} $\rho_0(G) = 1$.
\end{proof}

\hypertarget{bib}{}


\begin{thebibliography}{}
\bibitem{AGW}J. D. Adler, and M. Garlow and E. R. Wheland, 'Groups of order $p^4$ made less difficult', arXiv:1611.00461, 2016. \url{https://doi.org/10.48550/arXiv.1611.00461}
\bibitem{AB}J. L. Alperin and R. B. Bell, \emph{Groups and Representations}, Springer, 1991.
\bibitem{Ber}Y. Berkovich, \emph{Groups of Prime Power Order}, Volume 1, De Gruyter, 2008.
\bibitem{CU}H. Cohn and C. Umans, 'A group-theoretic approach to fast matrix multiplication', in: \emph{Proceedings of the 44th Annual Symposium on Foundations of Computer Science} (Cambridge, Massachusetts 2003), IEEE Computer Society (2003), 438-449. \url{https://dl.acm.org/doi/10.5555/946243.946301}
\bibitem{Cra}, D. A. Craven, The Theory of $p$-Groups, Lecture notes, 2008. \url{https://web.mat.bham.ac.uk/D.A.Craven/docs/lectures/pgroups.pdf}
\bibitem{IP}I. M. Isaacs and D. S. Passman, 'A characterization of groups in terms of the degrees of their characters', \emph{Pacific Journal of Mathematics}, 15(3): 877-903, 1965. \url{http://dx.doi.org/10.2140/pjm.1965.15.877}
\bibitem{GAP}The GAP Group, GAP - Groups, Algorithms, and Programming, Version 4.15.1, 2025. \url{https://www.gap-system.org}
\bibitem{HM}I. Hedtke and S. Murthy, 'Search and test algorithms for triple product property triples', \emph{Groups - Complexity - Cryptology}, 4(1): 111-133, 2012. \url{https://doi.org/10.1515/gcc-2012-0006}
\bibitem{Mur}S. R. Murthy, 'A note on the triple product property for finite groups with abelian normal subgroups of prime index', arXiv:2512.16730, 2025. \url{https://doi.org/10.48550/arXiv.2512.16730}
\bibitem{Neu2011}P. M. Neumann, 'A note on the triple product property for subsets of finite groups', \emph{LMS J. Computation and Mathematics}, 14:232-237, 2011. \url{https://doi.org/10.1112/S1461157010000288}
\bibitem{Neu2012}P. M. Neumann, Private communication, 2012.
\bibitem{Wil}M. W. Wild, 'The groups of order sixteen made easy', \emph{The American Mathematical Monthly}, 112(1): 20-31, 2005. \url{https://doi.org/10.2307/30037381}
\end{thebibliography}
\end{document}